\documentclass[12pt,reqno]{amsart}
\usepackage[a4paper,margin=2.5cm,top=2.5cm,bottom=2.5cm,centering,headheight=3ex,headsep=4ex,vcentering]{geometry}

\usepackage{amssymb,amsfonts,amsmath,amsthm}
\usepackage{orcidlink}

\usepackage[cal=cm,scr=euler]{mathalfa}
\usepackage{microtype}
\usepackage{mlmodern}

\newcommand{\bt}{\overline{bt}}

\theoremstyle{plain}

\numberwithin{equation}{section}

\newtheorem{theorem}{Theorem}[section]
\newtheorem{lemma}[theorem]{Lemma}

\newtheorem{proposition}[theorem]{Proposition}
\newtheorem{conjecture}[theorem]{Conjecture}
\theoremstyle{definition}

\usepackage{fancyhdr}

\title[Extending Recent Arithmetic Properties of Overcubic Partition Tuples]{Extending Recent Arithmetic Properties of Overcubic Partition Tuples}

\author[S. Maity]{Saikat Maity}
\address[S. Maity]{Mathematical and Physical Sciences division, School of Arts and Sciences, Ahmedabad University, Ahmedabad 380009, Gujarat, India}
\email{saikat.maity@ahduni.edu.in}

\author[M. P. Saikia]{Manjil P. Saikia}
\address[M. P. Saikia]{Mathematical and Physical Sciences division, School of Arts and Sciences, Ahmedabad University, Ahmedabad 380009, Gujarat, India}
\email{manjil.saikia@ahduni.edu.in}

\keywords{integer partitions, overcubic partitions, Ramanujan-type congruences, Ramanujan's theta functions, $q$-series.}
\thanks{Corresponding Author: Manjil P. Saikia (manjil.saikia@ahduni.edu.in).}

\subjclass[2020]{11P81, 11P83, 05A17.}

\begin{document}

\begin{abstract}
 In recent years, several mathematicians have looked at a class of partitions called the overcubic partition $k$–tuples, for small values of $k$. They have proved several divisibility results satisfied by these partitions. We continue this study and prove a few infinite family of congruences. Our proofs use elementary techniques from $q$-series and number theory.
\end{abstract}

\maketitle

\vspace{5mm}

\section{Introduction}\label{sec:intro}

A partition $\pi$ of a positive integer $n$ is a weakly decreasing sequence of positive integers $\pi_1 \geq \pi_2 \geq \cdots \geq \pi_k > 0$ such that $\pi_1 + \pi_2 + \cdots + \pi_k = n.$ The $\pi_i$ are called the parts of the partition. The number of partitions of $n$, denoted $p(n)$, satisfies the celebrated Ramanujan congruences
\[
  p(5n+4)\equiv 0\pmod{5},\quad
  p(7n+5)\equiv 0\pmod{7},\quad
  p(11n+6)\equiv 0\pmod{11},
\]
which have inspired a rich literature on arithmetic properties of partition functions and their generalizations \cite{Andrews1998}. By convention, we take $p(0)=1$. One of the basic questions in the theory of integer partitions is whether the partition function $p(n)$, or any of its subclasses, satisfies interesting arithmetic properties. The aim of this paper is to look at one such subclass and prove some infinite family of congruence results.

Since the time of Euler, integer partitions have been studied extensively, leading to many interesting subclasses and generalizations. One such generalization is the \textit{cubic partition} function, introduced by Chan \cite{Chan1, Chan2}. A cubic partition of a positive integer $n$ is a partition in which the even parts may appear in two different colors. Let $a(n)$ denote the number of such partitions of $n$. The generating function for $a(n)$ is given by
\[
\sum_{n \geq 0} a(n) q^n = \frac{1}{f_1 f_2},
\]
where, we use the standard notation 
$f_k := (q^k; q^k)_\infty = \prod_{i\geq 1}(1-q^{ki})$ for $|q|<1$.

Shortly after Chan's work, Kim \cite{Kim} introduced an overpartition version of cubic partitions, known as the \textit{overcubic partition} function. Let us first define overpartitions. An \emph{overpartition} of $n$, introduced by Corteel and Lovejoy~\cite{Corteel2004}, is a partition of $n$ in which the first occurrence of each part may be overlined.  For example, the eight overpartitions of $3$ are
\[
  3, \bar{3}, 2+1, \bar{2}+1, 2+\bar{1}, \bar{2}+\bar{1}, 1+1+1, \bar{1}+1+1.
\]
The number of overpartitions of $n$, denoted $\overline{p}(n)$, has the  generating function \[\sum_{n\geq 0}\overline{p}(n)q^n=\frac{f_2}{f_1^2}.\]

In a similar vein, let $\overline{a}(n)$ denote the number of overcubic partitions of $n$. In this setting, Kim considers cubic partitions where the first occurrence of each part may be overlined. The generating function is given by
\begin{equation}\label{gf-an}
\sum_{n\geq 0} \overline{a}(n) q^n = \frac{f_4}{f_1^2 f_2}.
\end{equation}
The work of Chan \cite{Chan1, Chan2} and Kim \cite{Kim} has given rise to a lot of follow-up work by several mathematicians. Zhao and Zhong \cite{ZhaoZhong} studied \textit{cubic partition pairs}, denoted by $b(n)$. The generating function for $b(n)$ is given by
\[
\sum_{n\geq 0} b(n) q^n = \frac{1}{f_1^2 f_2^2}.
\]
Kim \cite{Kim2} then considered the overpartition analogue of this function. Let $\overline{b}(n)$ denote the number of overcubic partition pairs of $n$. Its generating function is
\[
\sum_{n\geq 0} \overline{b}(n) q^n = \frac{f_4^2}{f_1^4 f_2^2}.
\]

This idea can be extended further, and indeed recently, Nayaka, Dharmendra, and Kumar \cite{NayakaDharmendraKumar} introduced overcubic partition triples. They denoted by $\bt(n)$ the number of such partitions of $n$, with generating function
\begin{align}\label{eq-gf}
\sum_{n\geq 0} \bt(n) q^n = \frac{f_4^3}{f_1^6 f_2^3}.
\end{align}
In a recent work, Saikia and Sarma~\cite{SS25} defined the overcubic partition $k$-tuples function with the following generating function
\begin{equation} \label{eq:gf-b_k}
\sum_{n \geq 0} \overline{b}_k(n) q^n = \frac{f_4^k}{f_1^{2k} f_2^k}.
\end{equation}
Note that $k=1$ gives us $\bar a(n)$ back, $k=2$ gives us $\bar b(n)$, and $k=3$ gives us $\overline{bt}(n)$. We also note that Ramanujan-type congruences have been studied for $a(n)$, $\overline{a}(n)$, $b(n)$, and $\overline{b}(n)$, and more recently for $\bt(n)$ by Nayaka, Dharmendra, and Kumar \cite{NayakaDharmendraKumar}. 

Saikia and Sarma \cite{SS25} proved several results related to $\overline{bt}(n)$ and $\bar b_{2k+1}(n)$ for $k\geq 0$, which extended some results of Nayaka, Dharmendra, and Kumar \cite{NayakaDharmendraKumar}. In their paper, Saikia and Sarma \cite{SS25} gave some conjectures related to the $\overline{bt}(n)$ function, which were recently proved by Chen, Jin, and Yao~\cite{Chen-Zing-Yao}, and later extended by Tang~\cite{Tang-BMMS}. Other work on these classes of partitions include those by Sellers \cite{Sellers} and Shivashankar and Naika \cite{ShivashankarNaika} for $k=1$, Lin \cite{Lin}, Naika and Shivashankar \cite{NaikaShivashankar}, and Ray and Barman \cite{RayBarman} for $k=2$, Nayaka \cite{Nayaka2026} for $k=3$, Buragohain and Saikia \cite{BuragohainSaikia2024} for $k=4m+2, 4m+3, 8m+2, 8m+4, 16m+4$, and Chaćon and Sellers \cite{ChaconSellers} for $k=1,3,4,8,25j+8,27j+8, 27j+10, 27j+14, 9j+2, 9j+7, 3j+1, 2j+1$. The aim of this paper is to contribute to this growing list of works and prove some infinite family of congruences, given by the theorems that now follow.

\begin{theorem}\label{thm-4}
For all integers $n\geq 0$, $i\geq 3$, and $k\geq 0$, we have
\begin{align}
\overline{b}_{2^i k + 2^{i-1}}(4n+2) &\equiv 0 \pmod{2^{i+1}}, \label{eq:b_4n+2}\\
\overline{b}_{2^i k + 2^{i-1}}(4n+3) &\equiv 0 \pmod{2^{i+2}}. \label{eq:b_4n+3}
\end{align}
\end{theorem}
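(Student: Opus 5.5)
The plan is to rewrite the generating function as a product of theta functions whose powers of $2$ can be tracked, and then $4$-dissect it. Put $N=2^{i-1}(2k+1)$, so $v_2(N)=i-1$. By \eqref{eq:gf-b_k}, the generating function of $\overline{b}_N(n)$ is $\bigl(f_4/(f_1^2f_2)\bigr)^N$. I would write this using Ramanujan's $\varphi(q)=f_2^5/(f_1^2f_4^2)$ and $\psi(q)=f_2^2/f_1$. Since $f_4/(f_1^2f_2)=(f_2/f_1^2)(f_4/f_2^2)=1/\bigl(\varphi(-q)\varphi(-q^2)\bigr)$ and $\varphi(q)\varphi(-q)=\varphi(-q^2)^2$, we get
\[
\sum_{n\ge0}\overline{b}_N(n)q^n=\frac{\varphi(q)^N\varphi(q^2)^{3N}}{\varphi(-q^4)^{6N}} .
\]
The denominator is a series in $q^4$, so it does not affect which residue class mod $4$ a term lands in. I would then insert the standard dissections $\varphi(q)=\varphi(q^4)+2q\psi(q^8)$ and $\varphi(q^2)=\varphi(q^8)+2q^2\psi(q^{16})$. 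Writing $a=\varphi(q^4)$, $x=\psi(q^8)$, $b=\varphi(q^8)$, $y=\psi(q^{16})$, all series in $q^4$, the numerator becomes
\[
\sum_{j,l}\binom{N}{j}\binom{3N}{l}2^{j+l}q^{j+2l}a^{N-j}x^jb^{3N-l}y^l .
\]
So the coefficients of $q^{4n+r}$ come only from pairs $(j,l)$ with $j+2l\equiv r\pmod 4$.

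The $2$-adic sizes are controlled by Kummer's theorem: $v_2\binom{N}{j}\ge i-1-v_2(j)$ for $j\ge1$, and likewise $v_2\binom{3N}{l}\ge i-1-v_2(l)$ for $l\ge1$. Since $j-v_2(j)\ge 1$, with equality only at $j=1,2$ and $\ge 3$ for $j\ge3$ (similarly for $l$), every term with $j\ge1$ and $l\ge1$ has valuation at least $2(i-1)+2=2i$. This is $\ge i+2$ for $i\ge2$. For terms with only one index nonzero, the valuation is at least $i-1+(j-v_2(j))$.

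For \eqref{eq:b_4n+3} we need $j$ odd with $j+2l\equiv 3 \pmod 4$.
\begin{itemize}
\item If $l=0$, then $j\ge 3$, giving valuation $\ge i+2$.
\item If $l\ge1$, the mixed bound $2i\ge i+2$ applies.
\end{itemize}
So the coefficients are $0$ modulo $2^{i+2}$.

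For \eqref{eq:b_4n+2} we need $j$ even with $j+2l\equiv 2\pmod 4$.
\begin{itemize}
\item Pairs with $j\ge 4$ and $l=0$, with $j=0$ and $l\ge 3$, or with both indices positive, all have valuation $\ge i+1$. For the mixed pairs this uses $i\ge 3$, where $2i\ge i+1$ with room to spare.
\item The only dangerous pair-terms are $(j,l)=(2,0)$ and $(0,1)$, each of exact valuation about $i$. I would combine them:
\[
4\binom N2 a^{N-2}x^2b^{3N}+6N a^Nb^{3N-1}y=2N\bigl((N-1)a^{N-2}x^2b^{3N}+3a^Nb^{3N-1}y\bigr).
\]
Here $v_2(2N)=i$, so it suffices that the bracket is even. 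Modulo $2$ we have $a\equiv b\equiv1$, $N-1$ is odd, and $x^2=\psi(q^8)^2\equiv\psi(q^{16})=y$. Hence the bracket is $\equiv x^2+y\equiv 0\pmod 2$.
\end{itemize}

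The main obstacle is exactly this cancellation of the two valuation-$i$ terms. Everything else is routine Kummer bookkeeping, which should be checked carefully at the boundary cases. In particular, the $(j,l)=(0,l)$ terms with $l\equiv 1\pmod 2$, $l\ge3$ have valuation $\ge i-1+l\ge i+2$. The hypothesis $i\ge3$ makes all the mixed-term bounds comfortable.
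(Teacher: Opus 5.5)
Your proof is correct, and it takes a different route from the paper.

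\textbf{The paper's route.} The paper writes the generating function as the infinite product $\prod_{r\ge0}\varphi(q^{2^r})^{c_r}$. It truncates this modulo $2^{i+2}$ to $\varphi(q)^M\varphi(q^2)^{3M}\varphi(q^4)^{6M}$ via Lemma~\ref{lem:phi-power}, and expands each factor as $(1+2T_c)^e$ with $T_c=\sum_{n\ge1}q^{cn^2}$ (Proposition~\ref{prop:reduction}). Powers of $T_1$ spread over every residue class modulo $4$, so it then needs facts about sums of squares (Lemma~\ref{lem:rep-counts}):
\begin{itemize}
\item $r_1(N)=r_2(N)=0$ and $4\mid r_4(N)$ for $N\equiv 3\pmod 4$;
\item $r_2(N)\equiv\delta(N)\pmod 2$;
\item a fixed-point-free involution to handle the $T_2^2$ term.
\end{itemize}

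\textbf{Your route.} You keep the exact identity, with the factor $\varphi(-q^4)^{-6N}$ an integral power series in $q^4$. You split each theta function into even and odd squares via $\varphi(q)=\varphi(q^4)+2q\psi(q^8)$. The residue class of the $(j,l)$ term is then exactly $j+2l \bmod 4$, so you need no truncation and no representation counts. The only arithmetic input is $\psi(q^8)^2\equiv\psi(q^{16})\pmod 2$. It plays the role of the paper's $r_2(N)\equiv\delta(N)$ step in cancelling the two valuation-$i$ terms.

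\textbf{What each buys.} Your argument is shorter and self-contained. It also never really uses $i\ge 3$: it needs only $N$ even and $2i\ge i+2$, so it proves both congruences for $i\ge 2$ as well. The paper's expansion in the $T_c$ is reused later, for the $4m+1$ bound in Theorem~\ref{thm-shift} and for the progressions $8n+5$, $8n+6$ in Theorem~\ref{thm-new}. There your series $a,x,b,y$ live only in $q^4$, so the $4$-dissection would have to be refined to see classes modulo $8$.

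\textbf{A small inaccuracy.} Your claim that $j-\nu_2(j)\ge 3$ for $j\ge 3$ fails at $j=4$, where the value is $2$. This does no harm. In the classes $2$ and $3 \pmod 4$, a term with one index zero forces the other index to be odd or $\equiv 2\pmod 4$, so the index $4$ never occurs there.
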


\begin{theorem}\label{thm-shift}
For all integers $n\geq 0$, $i\geq 3$, and $k\geq 0$, we have
\begin{equation}\label{eq:shift-4n+2}
  \overline{b}_{\,2^{i}k+2^{i-1}-2}(4n+2)\equiv 0 \pmod{2^{i+1}}.
\end{equation}
\end{theorem}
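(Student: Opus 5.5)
The plan is to reduce the generating function to Ramanujan's theta functions $\varphi(q)=\sum_{n\in\mathbb Z}q^{n^2}$ and $\psi(q)=\sum_{n\ge0}q^{n(n+1)/2}$, and then do two successive $2$-dissections. I will keep track of $2$-adic precision to first order, which is the same mechanism one would use for Theorem~\ref{thm-4}.

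\textbf{Rewriting the generating function.} Since $\varphi(-q)=f_1^2/f_2$, we have $f_4/(f_1^2f_2)=1/(\varphi(-q)\varphi(-q^2))$. Combining this with $\varphi(q)\varphi(-q)=\varphi(-q^2)^2$, \eqref{eq:gf-b_k} becomes
\[
\sum_{n\ge0}\overline{b}_m(n)q^n=\frac{\varphi(q)^m}{\varphi(-q^2)^{3m}} .
\]
Write $N=2^{i-1}(2k+1)$ and $m=N-2$. Then
\[
\sum_{n\ge0}\overline{b}_m(n)q^n=\frac{\varphi(q)^{N}}{\varphi(q)^2\,\varphi(-q^2)^{3m}}.
\]

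\textbf{Expanding $\varphi(q)^N$ modulo $2^{i+1}$.} The basic input is $\varphi(q)^2=\varphi(q^2)^2+4q\psi(q^4)^2$. Raising it to the power $N/2=2^{i-2}(2k+1)$ by the binomial theorem, the $j$-th term carries the factor $\binom{N/2}{j}4^j$. Its $2$-adic valuation is at least $i$ for $j=1$, and at least $i+1$ for $j\ge2$, because $v_2\binom{N/2}{j}\ge i-2-v_2(j)$ and $2j-v_2(j)\ge3$. Hence
\[
\varphi(q)^N\equiv \varphi(q^2)^{N}+2^{i}(2k+1)\,q\,\psi(q^4)^2\varphi(q^2)^{N-2}\pmod{2^{i+1}}.
\]
The correction term consists only of odd powers of $q$ and is already divisible by $2^i$.

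\textbf{Handling the remaining factor.} For $1/\varphi(q)^2=\varphi(-q)^2/\varphi(-q^2)^4$, the numerator is $\varphi(q^2)^2-4q\psi(q^4)^2$. Its even part is $\varphi(q^2)^2$ and its odd part carries a factor $4$.

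\textbf{Extracting the even part.} Multiply out and keep only even powers of $q$. The product of the two odd pieces contributes a multiple of $2^{i}\cdot4$, which vanishes modulo $2^{i+1}$. Replacing $q^2$ by $q$, this gives
\[
\sum_{n\ge0}\overline{b}_m(2n)q^n\equiv\frac{\varphi(q)^{N+2}}{\varphi(-q)^{3m}\varphi(-q^2)^4}\pmod{2^{i+1}}.
\]
Next rewrite $1/\varphi(-q)^{3m}=\varphi(q)^{3m}/\varphi(-q^2)^{6m}$, so the right side becomes $\varphi(q)^{4N-4}/\varphi(-q^2)^{6m+4}$, with a denominator that is even in $q$. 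The exponent $4N-4=2^{i+1}(2k+1)-4$ is $\equiv-4\pmod{2^{i+1}}$. Apply the same first-order expansion to $\varphi(q)^{2^{i+1}(2k+1)}$ (error now divisible by $2^{i+2}$, odd first-order term divisible by $2^{i+2}$), and use the exact dissection of $\varphi(q)^{-4}=\varphi(-q)^4/\varphi(-q^2)^8$, where
\[
\varphi(-q)^4=\bigl(\varphi(q^2)^2-4q\psi(q^4)^2\bigr)^2 .
\]
The odd part of $\varphi(-q)^4$ is $-8q\,\varphi(q^2)^2\psi(q^4)^2$, which is only divisible by $2^3$.

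\textbf{Main obstacle.} The odd part of the whole expression is therefore, modulo $2^{i+1}$, something like
\[
-8q\,\varphi(q^2)^{2^{i+1}(2k+1)+2}\psi(q^4)^2/(\cdots).
\]
A factor $8$ alone does not give divisibility by $2^{i+1}$ for $i\ge3$. So the key step is to show this odd part really carries $2^{i+1}$. I would first try to avoid splitting off $\varphi(q)^{-4}$ at all. Instead I would expand $\varphi(q)^{4N-4}=(\varphi(q^2)^2+4q\psi(q^4)^2)^{2N-2}$ directly, taking odd $j$ only, which is what the coefficients of $q^{4n+2}$ need. The $j$-th term has coefficient $\binom{2N-2}{j}4^j$.

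For $j=1$ this is $4(2N-2)=8(N-1)$. That is only $2^3$, so, as in the low-order check, it must be combined with the other factor. In the form $\varphi(q)^m/\varphi(-q^2)^{3m}$, the leading terms pair up as $2m\bigl[3\varphi(q^4)^2+(m-1)\varphi(q^8)^2\bigr]$. Using $m-1\equiv-3\pmod{2^{i-1}}$ and $\varphi(q^4)^2-\varphi(q^8)^2=4q^4\psi(q^{16})^2$, this produces an extra factor $4$.

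So the plan at this step is the following. Expand both $\varphi(q)^m$ (via $\varphi(q)=\varphi(q^4)+2q\psi(q^8)$) and $\varphi(-q^2)^{-3m}=\varphi(q^2)^{3m}/\varphi(-q^4)^{6m}$ binomially. Group the terms contributing to exponents $\equiv2\pmod4$. Then show, using $\psi(q)^2=\varphi(q)\psi(q^2)$, $\varphi(q)^2-\varphi(q^2)^2=4q\psi(q^4)^2$ and Kummer's theorem with $m\equiv-2\pmod{2^{i-1}}$, that each group has $2$-adic valuation at least $i+1$. I expect this grouping and valuation bookkeeping for general $i$ to be the hardest part.
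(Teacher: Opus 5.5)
\textbf{Gap.} The proposal never proves the decisive estimate, that the odd part of $\sum_n\overline{b}_m(2n)q^n$ is divisible by $2^{i+1}$. Your last two paragraphs replace it with an unexecuted plan. That plan expands $\varphi(q)^m$ and $\varphi(-q^2)^{-3m}$ binomially, then groups terms and uses Kummer. Since $\nu_2(m)=1$, individual terms have valuation as low as $2$, so the plan depends on cancellations that you never exhibit.

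More importantly, the obstacle you hit is created by an algebra slip in the step ``Extracting the even part''. Everything up to that step is correct. Just before it you have
\[
\sum_{n\ge0}\overline{b}_m(n)q^n=\frac{\varphi(q)^N\,\varphi(-q)^2}{\varphi(-q^2)^{3m+4}},
\]
and the whole denominator is a series in $q^2$. So when you take the even part and replace $q^2$ by $q$, the factor $\varphi(-q^2)^4$ coming from $1/\varphi(q)^2$ also becomes $\varphi(-q)^4$. The correct statement is
\[
\sum_{n\ge0}\overline{b}_m(2n)q^n\equiv\frac{\varphi(q)^{N+2}}{\varphi(-q)^{3m+4}}\pmod{2^{i+1}},
\]
not $\varphi(q)^{N+2}/(\varphi(-q)^{3m}\varphi(-q^2)^4)$. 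Your displayed congruence is false. For $i=3$, $k=0$ (so $N=4$, $m=2$), its coefficient of $q^1$ is $12+12=24\equiv 8\pmod{16}$, whereas $\overline{b}_2(2)=16$. The spurious exponent $4N-4$, and the factor $\varphi(q)^{-4}$ whose odd part is only divisible by $8$, both come from this slip.

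\textbf{Repair.} With the correct denominator, $1/\varphi(-q)=\varphi(q)/\varphi(-q^2)^2$ gives
\[
\sum_{n\ge0}\overline{b}_m(2n)q^n\equiv\frac{\varphi(q)^{4N}}{\varphi(-q^2)^{6m+8}}\pmod{2^{i+1}},\qquad N+2+3m+4=4N=2^{i+1}(2k+1).
\]
Lemma~\ref{lem:phi-power} then gives $\varphi(q)^{4N}\equiv 1\pmod{2^{i+2}}$. Equivalently, in your own expansion the odd $j=1$ term is now $8N\,q\,\psi(q^4)^2\varphi(q^2)^{4N-2}$, of valuation $i+2$, and the higher odd terms have larger valuation. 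Hence $\sum_n\overline{b}_m(2n)q^n$ is congruent modulo $2^{i+1}$ to a series in $q^2$, which is exactly $\overline{b}_m(4n+2)\equiv 0\pmod{2^{i+1}}$. No grouping or Kummer-type bookkeeping is needed.

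\textbf{Comparison with the paper.} Once repaired, your argument is a genuinely different route. The paper writes $\sum\overline{b}_{M-2}(n)q^n=\varphi(-q)^2\varphi(-q^2)^2\sum\overline{b}_M(n)q^n$. It then uses Lemma~\ref{lem:shift-multiplier}: $\gamma(j)=0$ for $j\equiv 2\pmod 4$ and $4\mid\gamma(j)$ for odd $j$. Finally it convolves with the congruences for $\overline{b}_M$ on the classes $1,2,3\pmod 4$ coming from Theorem~\ref{thm-4}. Your two-step $2$-dissection is self-contained and does not use Theorem~\ref{thm-4} at all. The paper's convolution scheme has the advantage that, with the sharper inputs of Theorem~\ref{thm-new} and Lemma~\ref{lem:pi-refined}, it directly yields the refinement modulo $2^{i+3}$ in Theorem~\ref{thm-shift2}.
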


For one sub-arithmetic progression we can do better than \eqref{eq:b_4n+2}, as given below.
\begin{theorem}\label{thm-new}
    For all integers $n\geq 0$, $i\geq 3$, and $k\geq 0$, we have
    \begin{align}
        \bar b_{2^ik+2^{i-1}}(8n+5)&\equiv 0 \pmod{2^{i+1}},\label{eq:8n+5}\\
        \overline{b}_{2^i k + 2^{i-1}}(8n+6) &\equiv 0 \pmod{2^{i+3}}. \label{eq:8n+6}
    \end{align}
\end{theorem}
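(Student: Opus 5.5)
The plan is to rewrite the generating function \eqref{eq:gf-b_k} in terms of Ramanujan's theta functions $\varphi(q)=\sum_{n\in\mathbb{Z}}q^{n^2}$ and $\psi(q)=\sum_{n\geq0}q^{n(n+1)/2}$, and then read off the $2$-adic valuation of each contribution to the arithmetic progressions $8n+5$ and $8n+6$ by binomial expansion. Put $K=2^{i}k+2^{i-1}=2^{i-1}m$ with $m=2k+1$ odd. Since $f_2/f_1^2=1/\varphi(-q)$ and $f_4/f_2^2=1/\varphi(-q^2)$, we have
\[
\sum_{n\ge0}\overline{b}_K(n)q^n=\frac{1}{\varphi(-q)^K\varphi(-q^2)^K}.
\]
Using $\varphi(q)\varphi(-q)=\varphi(-q^2)^2$ twice, this becomes
\[
\frac{\varphi(q)^K\,\varphi(q^2)^{3K}\,\varphi(q^4)^{6K}}{\varphi(-q^8)^{12K}}.
\]
The denominator is a series in $q^8$ and is $\equiv 1\pmod 2$, so it never changes the residue of exponents modulo $8$.

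Next I would use the $2$-dissection $\varphi(q)=\varphi(q^4)+2q\psi(q^8)$, together with its dilations $\varphi(q^2)=\varphi(q^8)+2q^2\psi(q^{16})$ and $\varphi(q^4)=\varphi(q^{16})+2q^4\psi(q^{32})$. Expanding each power binomially, a general term is indexed by $(j_1,j_2,j_3)$. It contributes exponents $\equiv j_1+2j_2+4j_3\pmod 8$, apart from factors that are series in $q^8$. Its coefficient is a product of factors $2^{j}\binom{N}{j}$ with $N\in\{K,3K,7K\}$; here $7K$ arises after $\varphi(q)^K$ at $j_1=0$ feeds another $\varphi(q^4)^K$. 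Each such factor has $2$-adic valuation at least $j+(i-1)-v_2(j)$. For the residue $5$ modulo $8$, $j_1$ must be odd. The cheapest choices are then $j_1=1$ combined with $j_3=1$, and $j_1=1$ combined with $j_2=2$, or $j_1=5$, $j_1=3$ with $j_2=1$, and so on. For each of these I would verify a valuation of at least $i+1$, using the exact values $v_2(2K)=i$ and $v_2(4\binom{3K}{2})=i$. For the residue $6$ modulo $8$, $j_1$ is even, and the candidates are $(2,2,0)$, $(2,0,1)$, $(0,1,1)$, $(4,1,0)$, $(6,0,0)$ and $(0,3,0)$. All of these except the last have valuation at least $2i\ge i+3$, or better.

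The main obstacle is the term $(0,3,0)$, namely $8\binom{3K}{3}q^6\psi(q^{16})^3$ times factors that are $\equiv1\pmod2$. A direct count gives $v_2\bigl(8\binom{3K}{3}\bigr)=3+v_2(K)+v_2(3K-2)-1=i+2$, one short of the claimed $2^{i+3}$. To close this gap I would first recheck this valuation and the claimed modulus numerically, for $i=3$ and small $n$. If the gap is genuine, I would look for a second contribution of exact valuation $i+2$ that cancels it modulo $2^{i+3}$. The natural candidate comes from expanding the accompanying factors $\varphi(q^8)^{3K-3}\varphi(q^4)^{7K}$ one step further, using $\varphi(q^4)^{2}\equiv\varphi(q^8)^{\ldots}$-type congruences modulo $4$. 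Failing that, I would rewrite $\psi(q)^3$ modulo $2$ via $\psi(q)^3\equiv\psi(q^2)\cdot\psi(q)\equiv\sum q^{\text{triangular}}\pmod 2$, to see whether the coefficients actually picked out in the progression $8n+6$ are even.

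Once the residue-$6$ bookkeeping is settled, the residue-$5$ case should be routine by the same table of valuations. Both congruences then follow by extracting the terms $q^{8n+5}$ and $q^{8n+6}$, replacing $q^8\to q$, and reading off the stated powers of $2$.
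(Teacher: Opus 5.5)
There is a genuine gap in the $8n+6$ case, and it comes from a bookkeeping error rather than from the method. Your $8n+5$ argument does go through: every term with $j_1=1$ needs $(j_2,j_3)\neq(0,0)$ and so picks up at least one more factor of $2$, while $j_1\ge 3$ already gives valuation $\ge i+2$.

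\textbf{The error.} After dissecting $\varphi(q)^K$, the leftover power is $\varphi(q^4)^{K-j_1}$. So the $j_3$-binomial is $\binom{7K-j_1}{j_3}$, not $\binom{7K}{j_3}$, and your bound $j+(i-1)-\nu_2(j)$ does not apply to it. For $(j_1,j_2,j_3)=(2,0,1)$ the coefficient is
\[
4\tbinom{K}{2}\cdot 2(7K-2)=8\tbinom{K}{2}(7K-2).
\]
Since $K\equiv 0\pmod 4$, we have $7K-2\equiv 2\pmod 4$, so its valuation is exactly $(i-2)+3+1=i+2$, not $\ge 2i$. Likewise $(1,0,1)$ has valuation exactly $i+1$, which is harmless for $8n+5$.

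\textbf{Your remedies for $(0,3,0)$ cannot work.} In residue class $6$, the factors accompanying this term are $\varphi(q^8)^{3K-3}\varphi(q^{16})^{7K}$ and the denominator. These are series in $q^8$ that are $\equiv 1\pmod 2$. Also $\psi(q^{16})^3\not\equiv 0\pmod 2$, since its constant term is $1$. So this term alone is not $0$ modulo $2^{i+3}$. For $i=3$, $k=0$ (so $K=4$), its contribution to $\bar b_4(6)$ is $8\binom{12}{3}=1760=2^5\cdot 55$.

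\textbf{The missing idea.} The two valuation-$(i+2)$ terms cancel each other. Modulo $2^{i+3}$, they contribute $2^{i+2}u\,q^6\psi(q^{16})^3$ and $2^{i+2}v\,q^6\psi(q^{8})^2\psi(q^{32})$ with $u,v$ odd. Since $\psi(q)^2\equiv\psi(q^2)\pmod 2$, both series reduce modulo $2$ to $q^6\psi(q^{16})\psi(q^{32})$. Hence their sum is $\equiv 0\pmod{2^{i+3}}$. For $K=4$ this reads $1760+1248=3008=2^6\cdot 47$, consistent with $\bar b_4(6)=11072=2^6\cdot 173$. Every other admissible $(j_1,j_2,j_3)$ with $j_1$ even and $j_1+2j_2+4j_3\equiv 6\pmod 8$ does have valuation $\ge i+3$.

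\textbf{Relation to the paper.} This is exactly the paper's mechanism in different clothing. The paper expands $\varphi(q^c)=1+2T_c$ and isolates $8\binom{M}{3}r_3(N)+8\binom{3M}{3}r_3(N/2)$, both of exact valuation $i+2$. It then proves $r_3(N)\equiv r_3(N/2)\pmod 2$ for $N\equiv 6\pmod 8$ in Lemma~\ref{lem:rep-counts}(c). There, the residue-$6$ part of $T_1^3$ comes from $3O^2E$ with $O=q\psi(q^8)$, which is your $(2,0,1)$ term, and $T_2^3$ corresponds to your $(0,3,0)$ term. With this correction, your $2$-dissection route becomes a valid alternative proof, replacing the sums-of-squares counting lemmas by a direct valuation table.
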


Similarly, for one sub-arithmetic progression we can do better than \eqref{eq:shift-4n+2}, as given below.
\begin{theorem}\label{thm-shift2}
For all integers $n\geq 0$, $i\geq 3$, and $k\geq 0$, we have
\begin{equation}\label{eq:shift-8n+6}
  \overline{b}_{\,2^{i}k+2^{i-1}-2}(8n+6)\equiv 0 \pmod{2^{i+3}}.
\end{equation}
\end{theorem}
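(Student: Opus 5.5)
We would work directly with the generating function, rewriting it in terms of Ramanujan's theta functions $\varphi(q)=f_2^5/(f_1^2f_4^2)$ and $\psi(q)=f_2^2/f_1$. Put $m=2^ik+2^{i-1}-2$ and $M=2^{i-1}(2k+1)$, so that $m=M-2$. Since $f_1^2/f_2=\varphi(-q)$ and $f_2^2/f_4=\varphi(-q^2)$, we have $f_4/(f_1^2f_2)=1/(\varphi(-q)\varphi(-q^2))$. Using $\varphi(q)\varphi(-q)=\varphi(-q^2)^2$, \eqref{eq:gf-b_k} becomes
\begin{equation*}
\sum_{n\ge0}\overline{b}_m(n)q^n=\frac{\varphi(-q)^2\varphi(-q^2)^2\,\varphi(q)^{M}}{\varphi(-q^2)^{3M}} .
\end{equation*}
The denominator is a series in $q^2$, so only the numerator factor $\varphi(-q)^2\varphi(q)^M$ needs dissecting.

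Step 1 is the first $2$-dissection. Set $A=\varphi(q^2)^2$ and $B=4q\psi(q^4)^2$, so that $\varphi(q)^2=A+B$ and $\varphi(-q)^2=A-B$. Then the numerator factor is $(A-B)(A+B)^{M/2}$ with $M/2=2^{i-2}(2k+1)$. Expanding binomially, the even part is a sum over terms $\binom{M/2}{r}A^{M/2+1-r}B^{r}$ with $r$ even, where the sign and binomial coefficient are adjusted by the factor $(A-B)$. We extract these even-power terms and replace $q^2$ by $q$.

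Step 2 is the second dissection. After Step 1 we need the coefficients of $q^{4n+3}$. The surviving terms are of the form $\varphi(q)^{2a}\,(4q\psi(q^2)^2)^{2j}$ times a function of $q^2$. We dissect $\varphi(q)^{2a}$ once more with the same identity $\varphi(q)^2=\varphi(q^2)^2+4q\psi(q^4)^2$, keep the odd part, and then keep the part of exponent $\equiv 3 \pmod 4$. That last selection uses the fact that $q^{4j}\psi(q^4)^{\ldots}$-type factors only shift exponents by multiples of $4$ or by fixed odd amounts.

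Step 3 is the $2$-adic bookkeeping. Each surviving term carries a factor $4^{s}$ from the powers of $B$, with $s$ odd and hence $s\ge 1$ after the odd-part extraction, together with products of binomial coefficients $\binom{2^{i-2}(2k+1)}{r}$ and similar coefficients from the inner expansion. Kummer's theorem gives $\nu_2\binom{2^{i-2}N}{r}\ge i-2-\nu_2(r)$. The required bound is then that every surviving term has $2$-adic valuation at least $i+3$, i.e.\ that $2s+(i-2-\nu_2(r))$ plus the inner contributions is at least $i+3$. When $r$ is small, for instance $r=2$, the binomial coefficient contributes about $i-3$ and the powers of $4$ contribute at least $2\cdot 3$. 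When $r$ is large, the powers of $4$ alone dominate, since $2r\ge i+3+\nu_2(r)$ once $r$ exceeds roughly $(i+3)/2$.

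The main obstacle is the low-$r$ terms ($r=0$ and $r=2$) in the combined expansion. In those terms the binomial valuation only barely suffices, so the argument must show that they either do not contribute to the progression $8n+6$ at all or are killed by an extra factor of $2$. For example, two equal contributions coming from $(A-B)$ might combine into a factor $2$. For these cases we would write out the first few terms of the expansion explicitly and check the residues modulo $4$ of their exponents. If the valuation falls short, the fallback is to pair the term with the corresponding one in the proof of \eqref{eq:8n+6} in Theorem~\ref{thm-new}. The two generating functions differ only by the factor $\varphi(-q)^2\varphi(-q^2)^2=(A-B)\varphi(-q^2)^2$, and we would apply the same dissection to that factor.
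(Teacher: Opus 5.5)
Your route (an iterated $2$-dissection of $\varphi(-q)^2\varphi(q)^M\varphi(-q^2)^{2-3M}$) differs from the paper's. As written, it fails exactly at the terms you call ``the main obstacle'', and Step~2 is set up incorrectly in a way that matters there. After taking the even part and sending $q^2\to q$, the factor $\varphi(-q^2)^{2-3M}$ becomes $\varphi(-q)^{2-3M}$, which is \emph{not} a series in $q^2$. Also $B^{2j}=16^jq^{2j}\psi(q^4)^{4j}$ becomes $16^jq^{j}\psi(q^2)^{4j}$, not $(4q\psi(q^2)^2)^{2j}$. With $c_r=\binom{M/2}{r}-\binom{M/2}{r-1}$, the coefficient $16^jc_{2j}$ has valuation at least $i+4$ for $j\ge2$. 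So what you actually need are the $q^{4n+3}$-coefficients, modulo $2^{i+3}$, of
\[
\varphi(q)^{M+2}\varphi(-q)^{2-3M}+16c_2\,q\,\psi(q^2)^4\varphi(q)^{M-2}\varphi(-q)^{2-3M}.
\]
Your Step~3 mechanism (forced odd powers of $B$ plus Kummer) handles neither term. For $j=1$ one has $\nu_2(16c_2)=i+1$, and the cofactor is needed at the \emph{even} exponent $4n+2$, so no further $B$ is forced. For $j=0$ there is no binomial factor at all. If you treat $\varphi(-q)^{2-3M}$ as a function of $q^2$, as you propose, the linear $B$-coefficient of $(A+B)^{M/2+1}$ is the odd number $M/2+1$, so your counting yields only $\nu_2\ge 2$. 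The ``fallback'' of pairing with \eqref{eq:8n+6} is not an argument. Carrying it out is the paper's proof: it writes $\overline{b}_{M-2}(N)=\sum_j\gamma(j)\,\overline{b}_M(N-j)$, uses the refined divisibility of $\gamma(j)$ from Lemma~\ref{lem:pi-refined}, and checks each class of $j \bmod 8$ against \eqref{eq:b_4n+2}, \eqref{eq:b_4n+3}, \eqref{eq:4n+1}, \eqref{eq:8n+5} and \eqref{eq:8n+6}. None of this is developed in your proposal.

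Two steps are missing to make your route work.
\begin{itemize}
\item For $j=0$, rewrite $\varphi(-q)^{-3M}=\varphi(q)^{3M}\varphi(-q^2)^{-6M}$, so the term is $(A+B)^{2M}(A^2-B^2)\,\varphi(-q^2)^{-6M}$ with the same $A,B$. Its odd part is $\equiv 8Mq\,\psi(q^4)^2\varphi(q^2)^{4M+2}\varphi(-q^2)^{-6M}\pmod{2^{i+3}}$. This is still one factor of $2$ short, since $\nu_2(8M)=i+2$. The last factor comes from a third-level residue check: the cofactor is $\equiv\psi(q^4)^2\pmod 4$, so its coefficients at $q^{4n+2}$ are divisible by $4$.
\item For $j=1$, use $\varphi(\pm q)^2\equiv1\pmod4$ and $\psi(q^2)^4\equiv\psi(q^4)^2\pmod4$. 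These show the required coefficient of the cofactor is $\equiv0\pmod4$.
\end{itemize}
With these two steps your dissection gives a self-contained proof. It bypasses Theorem~\ref{thm-new}, Jacobi's four-square theorem and the parity $r_3(N)\equiv r_3(N/2)\pmod 2$. Without them, the proposal does not establish the congruence.
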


Some remarks are in order:
\begin{enumerate}
\item Theorem \ref{thm-4} can be thought of as extensions of the following result of Buragohain and Saikia \cite{BuragohainSaikia2024} as observed by Chaćon and Sellers \cite[Theorem 5.2]{ChaconSellers}: for all $\alpha\geq 1, \ell \geq 0$ and $n\geq 1$, we have
\begin{equation}\label{eq:bs}
    \bar b_{2^{\alpha-1} \ell}(n) \equiv 0 \pmod{2^{\alpha}}.
\end{equation}
\eqref{eq:b_4n+2} improves the modulus by an extra factor of $2$, while \eqref{eq:b_4n+3} improves the modulus by an extra factor of $2^2$. Also note that \eqref{eq:bs} only gives a modulo $2$ result for the case considered in Theorem \ref{thm-shift}.
\item \eqref{eq:bs} already gives us, for all $m, k\geq 0, i\geq 3$:
\begin{equation}\label{eq:4n+1}
    \bar b_{2^ik+2^{i-1}}(4m+1)\equiv 0 \pmod{2^i}.
\end{equation} In \eqref{eq:8n+5} we improve the modulus by factor of $2$ for the case $m=2n$ in \eqref{eq:4n+1}. While in \eqref{eq:8n+6} we improve the modulus by a factor of $2^2$ compared to \eqref{eq:b_4n+2}. Similarly, in \eqref{eq:shift-8n+6} we improve the modulus by a factor of $2^2$ compared to \eqref{eq:shift-4n+2}.
    \item Theorem \ref{thm-4} partially extends one congruence proved in \cite{SS25}, where the authors proved the $i=k=1$ case of \eqref{eq:b_4n+3}. In addition, Theorem \ref{thm-4} also generalizes a few cases of \cite[Theorem 1.9]{SS25}. Moreover, the moduli in \eqref{eq:b_4n+2} and \eqref{eq:b_4n+3} are best possible.
    \item The authors strongly believe more infinite families are lying hidden. We note the following related conjecture, which we believe may follow from techniques similar to our own.
\begin{conjecture}\label{conj:main}
For all $n\geq 0, i\geq 3, k\geq 0$, we have
\begin{align}
\overline{b}_{2^i k + 2^{i-1}-1}(8n+5) &\equiv 0 \pmod{2^{i+2}}, \label{conj:b_8n+5}\\
\overline{b}_{2^i k + 2^{i-1}-1}(8n+7) &\equiv 0 \pmod{2^{i+3}} \label{conj:b_8n+7}.
\end{align}
\end{conjecture} It may also be worthwhile to check similar congruences modulo powers of other primes, we leave this avenue open for the interested readers.
\end{enumerate}

This paper is organized as follows: in Section \ref{sec:prel} we collect several results that will be used to furnish the proofs of the above results in Section \ref{sec:proof}. Our proof uses properties of Ramanujan's theta functions and some other elementary results from number theory.

\section{Preliminaries}\label{sec:prel}

Throughout this paper we fix integers $i\geq 3$ and $k\geq 0$ and write
\[
M := 2^i k + 2^{i-1} = 2^{i-1}(2k+1),
\]
so that $\nu_2(M)=i-1$ and, in particular, $M\equiv 0\pmod 4$. Here, $\nu_2(x)$ denotes the $2$-adic valuation of $x$, that is, the
largest power of $2$ dividing $x$.

Our proof rests on expressing the generating function \eqref{eq:gf-b_k}
in terms of Ramanujan's theta function
\[
\varphi(q) := \sum_{n=-\infty}^{\infty} q^{n^2} = 1 + 2\sum_{n\geq 1} q^{n^2},
\]
and then reducing modulo the relevant power of $2$. We will use the
classical formulas \cite[Eq.~(1.5.8)]{Hirschhorn}
\begin{equation}\label{eq:phi-products}
\varphi(-q) = \frac{f_1^2}{f_2}, \qquad\text{and}\qquad
\varphi(-q^2) = \frac{f_2^2}{f_4},
\end{equation}
as well as the identity \cite[Eq.~(1.5.13)]{Hirschhorn}
\begin{equation}\label{eq:phi-mult}
\varphi(q)\,\varphi(-q) = \varphi(-q^2)^2.
\end{equation}
We also need the following later \cite[Eq. (1.5.7)]{Hirschhorn}
\[
\psi(q):=\sum_{n\geq 0} q^{n(n+1)/2}=\frac{f_2^2}{f_1}.
\]

We need the following lemma connecting the two functions defined above, for the proof of Theorem \ref{thm-shift}.
\begin{lemma}\label{lem:shift-multiplier}
    We have
    \begin{equation}\label{eq:gamma-dissection}
  \varphi(-q)^2\,\varphi(-q^2)^2
  =\varphi(-q^4)^4-4q\,\psi(q^4)^2\,\varphi(-q^2)^2 .
\end{equation}
Writing $\displaystyle \varphi(-q)^2\,\varphi(-q^2)^2=\sum_{N\geq 0}\gamma(N)\,q^N$,
it follows that
\begin{equation}\label{eq:gamma-properties}
  \gamma(N)=0 \ \text{ for } N\equiv 2 \pmod 4,
  \qquad
  \gamma(N)\equiv 0 \pmod 4 \ \text{ for } N \text{ odd.}
\end{equation}
\end{lemma}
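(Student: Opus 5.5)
We need to prove $\varphi(-q)^2\varphi(-q^2)^2=\varphi(-q^4)^4-4q\psi(q^4)^2\varphi(-q^2)^2$, and then read off the coefficient properties \eqref{eq:gamma-properties}.

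The plan is to use the classical 2-dissection of $\varphi(-q)$, namely $\varphi(q)=\varphi(q^4)+2q\psi(q^8)$. Replacing $q$ by $-q$ gives
\[
\varphi(-q)=\varphi(q^4)-2q\psi(q^8).
\]
This follows directly from the series definitions by splitting $n$ into even and odd values: $\sum_{n\text{ odd}}q^{n^2}=2\sum_{m\ge0}q^{(2m+1)^2}=2q\psi(q^8)$. Squaring gives
\[
\varphi(-q)^2=\varphi(q^4)^2-4q\varphi(q^4)\psi(q^8)+4q^2\psi(q^8)^2 .
\]
A cleaner route is to use \eqref{eq:phi-mult} together with the known identities
\[
\varphi(q)^2=\varphi(q^2)^2+4q\psi(q^4)^2,
\qquad
\varphi(-q)^2=\varphi(q^2)^2-4q\psi(q^4)^2 .
\]
These can be checked from the product forms via $\varphi(q)^2-\varphi(-q)^2=8q\psi(q^4)^2$ and $\varphi(q)^2+\varphi(-q)^2=2\varphi(q^2)^2$, both standard in \cite{Hirschhorn}.

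Multiplying the second identity by $\varphi(-q^2)^2$ and applying \eqref{eq:phi-mult} with $q\to q^2$ gives $\varphi(q^2)^2\varphi(-q^2)^2=\varphi(-q^4)^4$. Hence
\[
\varphi(-q)^2\varphi(-q^2)^2=\varphi(-q^4)^4-4q\,\psi(q^4)^2\varphi(-q^2)^2,
\]
which is \eqref{eq:gamma-dissection}. The main obstacle is justifying the identity $\varphi(-q)^2=\varphi(q^2)^2-4q\psi(q^4)^2$. If it is not quoted directly, I would derive it by squaring the dissection $\varphi(q)=\varphi(q^4)+2q\psi(q^8)$. One then needs $\varphi(q^4)^2+4q^2\psi(q^8)^2=\varphi(q^2)^2$, which is the even part of the sum-of-two-squares identity $\varphi(q)^2=\varphi(q^2)^2+4q\psi(q^4)^2$ at $q^2$. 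One also needs $\varphi(q^4)\psi(q^8)=\psi(q^4)^2$, which is the $q\to q^4$ case of $\varphi(q)\psi(q^2)=\psi(q)^2$. The latter follows from the product formulas, using $\varphi(q)=f_2^5/(f_1^2f_4^2)$: the product is $\frac{f_2^5}{f_1^2f_4^2}\cdot\frac{f_4^2}{f_2}=\frac{f_2^4}{f_1^2}=\psi(q)^2$.

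For the coefficient statements, the first term $\varphi(-q^4)^4$ is a series in $q^4$, so it contributes only to exponents $N\equiv0\pmod4$. The second term is $-4q\,\psi(q^4)^2\varphi(-q^2)^2$. Since $\psi(q^4)^2\varphi(-q^2)^2$ is a series in $q^2$, this term contributes only to odd exponents $N$, and every coefficient it contributes is divisible by $4$ because the series has integer coefficients. Therefore $\gamma(N)=0$ for $N\equiv2\pmod4$, and $\gamma(N)\equiv0\pmod4$ for odd $N$.
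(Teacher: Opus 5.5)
This is correct and is essentially the paper's argument: you quote $\varphi(-q)^2=\varphi(q^2)^2-4q\psi(q^4)^2$, multiply by $\varphi(-q^2)^2$ and apply \eqref{eq:phi-mult} at $q^2$. Your observation that the second term is $q$ times a series in $q^2$ in fact gives $\gamma(N)=0$ exactly for $N\equiv 2\pmod 4$ more explicitly than the paper's remark that the right side is a series in $q^4$ modulo $4$. Only your optional self-contained derivation needs care, since it invokes the same two-squares identity at $q^2$; it closes because the defect $F(q):=\varphi(q)^2-\varphi(q^2)^2-4q\psi(q^4)^2$ satisfies $F(q)=-F(q^2)$ with $F(0)=0$, which forces $F=0$.
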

\begin{proof}
We need the following identity \cite[(34.1.13)]{Hirschhorn}
\[
\varphi(-q)^2=\varphi(q^2)^2-4q\,\psi(q^4)^2.
\]
Multiplying this by $\varphi(-q^2)^2$ on both sides and using \eqref{eq:phi-mult} with $q\mapsto
q^2$ we obtain \eqref{eq:gamma-dissection}.

    For the proof of \eqref{eq:gamma-properties}, notice that the RHS of \eqref{eq:gamma-dissection} modulo $4$ is a series in $q^4$, which immediately implies the result.
\end{proof}
\noindent The following lemma will be used in the proof of Theorem \ref{thm-shift2}.
\begin{lemma}\label{lem:pi-refined}
With $\gamma(N)$ as in
Lemma~\textup{\ref{lem:shift-multiplier}}, we have the following
\begin{align*}
   \gamma(0)&=1, \\
   \gamma(N)&\equiv 0 \pmod 8 \ \text{ for } N\equiv 0 \pmod 4,\ N\ge 4,\\
   \gamma(N)&\equiv 0 \pmod 4 \ \text{ for } N \text{ odd},\\
   \gamma(N)&\equiv 0 \pmod 8 \ \text{ for every odd $N\not\equiv 1 \pmod 8$}.
\end{align*}
\end{lemma}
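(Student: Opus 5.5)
The plan is to read everything off the $2$-dissection \eqref{eq:gamma-dissection} of Lemma~\ref{lem:shift-multiplier}, treating the two terms on its right-hand side separately and reducing each modulo $8$. First I sort exponents by parity. $\varphi(-q^4)^4$ is a series in $q^4$. In the second term $4q\,\psi(q^4)^2\varphi(-q^2)^2$, the factor $\psi(q^4)^2\varphi(-q^2)^2$ is a series in $q^2$, so this term only contributes odd exponents. Hence
\[
\sum_{N\equiv 0 \ (4)}\gamma(N)q^N=\varphi(-q^4)^4,\qquad
\sum_{N \text{ odd}}\gamma(N)q^N=-4q\,\psi(q^4)^2\varphi(-q^2)^2 .
\]
The statement for odd $N$ modulo $4$ is then immediate (it is already in \eqref{eq:gamma-properties}).

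For $N\equiv 0\pmod 4$, I write $\varphi(-q)=1+2S$ with $S=\sum_{n\ge1}(-1)^nq^{n^2}$, which has integer coefficients. Expanding,
\[
\varphi(-q)^4=1+8S+24S^2+32S^3+16S^4\equiv 1\pmod 8 .
\]
Replacing $q$ by $q^4$ gives $\gamma(0)=1$ and $\gamma(N)\equiv 0\pmod 8$ for all $N\equiv 0\pmod 4$ with $N\ge 4$. Alternatively one could quote Jacobi's four-square formula, but the elementary expansion suffices.

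For odd $N$, it is enough to know $\psi(q^4)^2\varphi(-q^2)^2$ modulo $2$, because of the factor $4$ in front. The same expansion gives $\varphi(-q^2)^2=(1+2S(q^2))^2\equiv 1\pmod 4$, in particular $\equiv 1 \pmod 2$. By the Frobenius congruence $F(q)^2\equiv F(q^2)\pmod 2$, we get $\psi(q^4)^2\equiv\psi(q^8)\pmod 2$. Therefore
\[
\sum_{N\text{ odd}}\gamma(N)q^N\equiv -4q\,\psi(q^8)=-4\sum_{m\ge0}q^{1+4m(m+1)}\pmod 8 .
\]
Every exponent $1+4m(m+1)$ is $\equiv 1\pmod 8$, since $m(m+1)$ is even. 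So $\gamma(N)\equiv 0\pmod 8$ for every odd $N\not\equiv 1\pmod 8$.

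There is no serious obstacle here. The only point needing care is to justify that the mod-$2$ reduction of the cofactor is legitimate (the factor $4$ upgrades a mod-$2$ congruence to a mod-$8$ one) and to apply the squaring congruence to $\psi$.
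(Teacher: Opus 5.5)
Your proof is correct. For odd $N$ it matches the paper's argument: both reduce the cofactor $\psi(q^4)^2\varphi(-q^2)^2$ modulo $2$ to $\psi(q^8)$. Both then use the factor $4$ to turn this into a congruence modulo $8$ supported on exponents $\equiv 1 \pmod 8$.

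The only real difference is the case $N\equiv 0\pmod 4$. There the paper writes $\gamma(4m)=(-1)^m[q^m]\varphi(q)^4$ and invokes Jacobi's four-square theorem, $[q^m]\varphi(q)^4=8\sum_{d\mid m,\,4\nmid d}d$ for $m\ge 1$. You instead expand $(1+2S)^4=1+8S+24S^2+32S^3+16S^4\equiv 1\pmod 8$.

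Your route is more elementary and self-contained. It is just the argument of Lemma~\ref{lem:phi-power} with $a=2$, $u=1$, followed by the substitution $q\mapsto -q^4$. Jacobi's theorem gives the exact value of $\gamma(4m)$, which is more than this lemma needs. Your explicit parity sorting of the two terms in \eqref{eq:gamma-dissection} also spells out the step $\gamma(4m)=[q^{4m}]\varphi(-q^4)^4$, which the paper uses without comment.
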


\begin{proof}
The first assertion is easy to see. The third assertion is just a repetition of the second assertion in \eqref{eq:gamma-properties}.

    By \eqref{eq:gamma-dissection}, for $N\equiv 0\pmod 4$, say $N=4m$,
\[
  \gamma(4m)=[q^{4m}]\,\varphi(-q^4)^4=[q^{m}]\,\varphi(-q)^4=(-1)^m\,[q^{m}]\,\varphi(q)^4,
\]
where the operator $[q^X]$ extracts out the coefficient of $q^X$ from the power series on which it is applied to. Now, by Jacobi's four-square theorem \cite[Theorem 65]{Johnson}, we know that $[q^{m}]\varphi(q)^4=8\sum_{d| m,\,4\nmid d}d$,
which equals $1$ for $m=0$ and is divisible by $8$ for $m\ge 1$. This gives the
second assertion.

For $N$ odd, \eqref{eq:gamma-dissection} gives
$\gamma(N)=-4\,[q^{N-1}]\bigl(\psi(q^4)^2\,\varphi(-q^2)^2\bigr)$, so $4|\gamma(N)$, as already observed. Reducing the series modulo $2$, we have $\varphi(-q^2)\equiv 1\pmod 2$ from definition and $\psi(q^4)^2\equiv\psi(q^8)\pmod 2$, which is easy to see. Hence,
$\psi(q^4)^2\varphi(-q^2)^2\equiv\psi(q^8)\pmod 2$. Thus, for $N$ odd we have
\[
\gamma(N)\equiv -4[q^{N-1}]\psi(q^8) \pmod 8,
\]
where the RHS is a series in $q^8$, from which the final assertion now follows.
\end{proof}

Let us now rewrite \eqref{eq-gf} in terms of $\varphi(q)$'s.
\begin{lemma}\label{lem:gf-theta}\cite[Corollary 3.2]{ChaconSellers}
We have
\begin{equation}\label{eq:gf-theta}
\sum_{n\geq 0}\overline{b}_M(n)\, q^n
= \frac{1}{\varphi(-q)^M\,\varphi(-q^2)^M}
= \varphi(q)^M\,\varphi(q^2)^{3M}\,\varphi(q^4)^{6M}\,\varphi(q^8)^{12M}\cdots,
\end{equation}
where the infinite product is $\prod\limits_{r\geq 0}\varphi\big(q^{2^r}\big)^{c_r}$
with $c_0 = M$ and $c_r = 3\cdot 2^{r-1}M$ for $r\geq 1$.
\end{lemma}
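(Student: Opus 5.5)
The plan is to prove the two equalities in \eqref{eq:gf-theta} separately: first identify the generating function with $1/(\varphi(-q)\varphi(-q^2))^M$ by a direct product computation, and then expand that quotient into the infinite product by iterating \eqref{eq:phi-mult}.

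\emph{First equality.} By \eqref{eq:phi-products},
\[
\varphi(-q)\,\varphi(-q^2)=\frac{f_1^2}{f_2}\cdot\frac{f_2^2}{f_4}=\frac{f_1^2 f_2}{f_4}.
\]
Raising to the power $-M$ gives exactly $f_4^M/(f_1^{2M}f_2^M)$. This is the generating function \eqref{eq:gf-b_k} with $k=M$, so the first equality holds.

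\emph{Second equality.} Applying \eqref{eq:phi-mult} with $q\mapsto q^{2^r}$ gives, for every $r\ge 0$,
\[
\frac{1}{\varphi(-q^{2^r})}=\frac{\varphi(q^{2^r})}{\varphi(-q^{2^{r+1}})^2}.
\]
I would start from $1/(\varphi(-q)\varphi(-q^2))$. The $r=0$ case turns it into $\varphi(q)/\varphi(-q^2)^3$. The $r=1$ case then turns this into $\varphi(q)\varphi(q^2)^3/\varphi(-q^4)^6$. Continuing, an induction on $R$ shows
\[
\frac{1}{\varphi(-q)\varphi(-q^2)}=\prod_{r=0}^{R-1}\varphi(q^{2^r})^{e_r}\cdot\frac{1}{\varphi(-q^{2^R})^{e_R}},
\]
with $e_0=1$, $e_1=3$ and $e_{r+1}=2e_r$. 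In the inductive step, the single extra substitution turns the denominator $\varphi(-q^{2^R})^{e_R}$ into $\varphi(q^{2^R})^{e_R}$ in the numerator and $\varphi(-q^{2^{R+1}})^{2e_R}$ in the denominator. Solving the recursion gives $e_r=3\cdot 2^{r-1}$ for $r\ge1$. Raising everything to the $M$-th power yields $c_0=M$ and $c_r=3\cdot2^{r-1}M$.

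\emph{Main obstacle.} The only point needing care is passing to the infinite product as formal power series. Since $\varphi(-q^{2^R})^{e_R}=1+O(q^{2^R})$, the tail factor $\varphi(-q^{2^R})^{-e_RM}$ agrees with $1$ up to order $q^{2^R-1}$. Similarly, every factor $\varphi(q^{2^r})$ with $r\ge R$ is $1+O(q^{2^R})$. Hence for each fixed $N$, the coefficient of $q^N$ in the infinite product stabilizes once $2^R>N$ and equals the coefficient on the left. This proves \eqref{eq:gf-theta}.
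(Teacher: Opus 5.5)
Your proof is correct. The paper gives no argument of its own here; it cites \cite[Corollary 3.2]{ChaconSellers}, but it records \eqref{eq:phi-products} and \eqref{eq:phi-mult} just beforehand, and your product computation, iterated substitution $1/\varphi(-q^{2^r})=\varphi(q^{2^r})/\varphi(-q^{2^{r+1}})^2$, and coefficient-stabilization argument are the natural derivation from them. One cosmetic point: the recursion $e_{r+1}=2e_r$ holds only for $r\ge 1$ (since $e_1=3\neq 2e_0$), so the induction must start at $R=1$, which is what your first explicit step does.
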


We also need the following two elementary facts.
\begin{lemma}\label{lem:dbinom-val}
Let $e$ be a positive integer and $m\geq 1$. Then
\[
\nu_2\dbinom{e}{m} \;\geq\; \nu_2(e) - \nu_2(m).
\]
Consequently, writing $\varphi(q^c) = 1 + 2T_c$ with
$T_c := \sum_{n\geq 1} q^{cn^2}$, the binomial expansion
\begin{equation}\label{eq:phi-expansion}
\varphi(q^c)^{e} = \sum_{m=0}^{e}\dbinom{e}{m} 2^m\, T_c^{\,m}
\end{equation}
has $m$-th term of $2$-adic valuation at least $m + \nu_2(e) - \nu_2(m)$.
\end{lemma}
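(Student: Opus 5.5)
The first assertion is the real content; the ``consequently'' part then follows by bookkeeping.

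\textbf{The binomial valuation.} I would use the absorption identity
\[
m\dbinom{e}{m} = e\dbinom{e-1}{m-1},
\]
valid for all integers $e\geq 1$ and $m\geq 1$. Taking $2$-adic valuations of both sides gives
\[
\nu_2(m)+\nu_2\dbinom{e}{m} = \nu_2(e) + \nu_2\dbinom{e-1}{m-1} \geq \nu_2(e),
\]
because $\dbinom{e-1}{m-1}$ is a nonnegative integer. For $1\leq m\leq e$ this integer is positive, so its valuation is finite and $\geq 0$. For $m>e$ both binomials vanish, their valuation is $+\infty$, and the inequality is trivial. Rearranging yields $\nu_2\binom{e}{m}\geq \nu_2(e)-\nu_2(m)$. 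I would note explicitly that the bound may be negative (for instance when $m$ is a large power of $2$); the statement is still true then, since valuations of integers are nonnegative anyway.

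\textbf{The expansion.} Next I would write $\varphi(q^c)=1+2T_c$, where $T_c=\sum_{n\geq1}q^{cn^2}$ has integer coefficients. This is immediate from the definition $\varphi(q)=1+2\sum_{n\geq 1}q^{n^2}$ with $q\mapsto q^c$. Expanding by the binomial theorem gives \eqref{eq:phi-expansion}. The $m$-th term is $\binom{e}{m}2^mT_c^{\,m}$, and every coefficient of $T_c^{\,m}$ is an integer. So each coefficient of this term is divisible by $2^{\,m+\nu_2\binom{e}{m}}$, and therefore by $2^{\,m+\nu_2(e)-\nu_2(m)}$. For $m=0$ the term is just $1$, and the claim is only asserted for $m\geq1$, which matches the hypothesis of the lemma.

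\textbf{Main obstacle.} There is essentially none. The only care needed is to interpret ``$2$-adic valuation of a series'' as the minimum valuation of its coefficients, and to handle the degenerate range $m>e$, which the finite sum already excludes.
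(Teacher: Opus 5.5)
Your proof is correct: the absorption identity $m\binom{e}{m}=e\binom{e-1}{m-1}$ immediately gives $\nu_2(m)+\nu_2\binom{e}{m}=\nu_2(e)+\nu_2\binom{e-1}{m-1}\geq \nu_2(e)$. The claim about the $m$-th term of the expansion then follows because $T_c^{\,m}$ has integer coefficients. The paper gives no proof of this lemma and lists it among the ``elementary facts''. Your argument supplies the standard justification the authors omit, and your remarks on the case $m>e$ and on the bound possibly being negative are accurate.
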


\begin{lemma}\label{lem:phi-power}
For every integer $a\geq 0$ and every $c\geq 1$,
\[
\varphi(q^c)^{2^a u} \equiv 1 \pmod{2^{a+1}}
\]
for all positive integers $u$.
\end{lemma}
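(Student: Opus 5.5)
We prove Lemma~\ref{lem:phi-power} by induction on $a$, after first reducing to the case $u=1$. If $\varphi(q^c)^{2^a}\equiv 1\pmod{2^{a+1}}$, write $\varphi(q^c)^{2^a}=1+2^{a+1}R$ with $R$ a power series with integer coefficients. Then
\[
\varphi(q^c)^{2^a u}=(1+2^{a+1}R)^u\equiv 1\pmod{2^{a+1}},
\]
since every term of the binomial expansion beyond the constant term carries a factor $2^{a+1}$. So it suffices to treat $u=1$, and we fix $c$ throughout.

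For the base case $a=0$, the definition gives $\varphi(q^c)=1+2T_c$, where $T_c=\sum_{n\ge1}q^{cn^2}$ has integer coefficients. Hence $\varphi(q^c)\equiv 1\pmod 2$.

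For the inductive step, assume $\varphi(q^c)^{2^a}=1+2^{a+1}R$ with $R\in\mathbb{Z}[[q]]$. Squaring gives
\[
\varphi(q^c)^{2^{a+1}}=1+2^{a+2}R+2^{2a+2}R^2 .
\]
Since $2a+2\ge a+2$ for all $a\ge 0$, the right-hand side is $\equiv 1\pmod{2^{a+2}}$, which closes the induction.

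As a cross-check, the same conclusion follows from Lemma~\ref{lem:dbinom-val} with $e=2^a u$. The $m$-th term of \eqref{eq:phi-expansion} has $2$-adic valuation at least $m+a-\nu_2(m)$. Because $m-\nu_2(m)\ge 1$ for every $m\ge 1$, each term with $m\ge1$ has valuation at least $a+1$. The argument has no real obstacle; the only point needing care is the inequality $m-\nu_2(m)\ge 1$, or equivalently $2a+2\ge a+2$ in the induction. Either route works, and we would present the induction since it is self-contained.
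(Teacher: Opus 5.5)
Your proof is correct, but your main argument takes a different route from the paper's. The paper proves the lemma in one line: it applies Lemma~\ref{lem:dbinom-val} with $e=2^a u$ to the binomial expansion \eqref{eq:phi-expansion} and observes that the $m$-th term has valuation at least $m+a-\nu_2(m)\ge a+1$. That is exactly what you offer as a cross-check.

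Your primary route reduces to $u=1$ and then lifts $\varphi(q^c)\equiv 1\pmod 2$ by repeated squaring. It is fully self-contained and uses nothing beyond $\varphi(q^c)=1+2T_c$. In particular, it does not rely on Lemma~\ref{lem:dbinom-val}, which the paper states without proof (it follows from $m\binom{e}{m}=e\binom{e-1}{m-1}$).

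The paper's route buys uniformity. The same term-by-term bound $\nu_2(e)+m-\nu_2(m)$ is reused in Proposition~\ref{prop:reduction(i+3)} to decide exactly which binomial terms survive modulo $2^{i+3}$. The squaring argument cannot supply that finer information, since it only gives the coarse congruence to $1$, though that is all this lemma needs.
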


\begin{proof}
By Lemma \ref{lem:dbinom-val} with $e = 2^a u$, the $m$-th term
of \eqref{eq:phi-expansion} has valuation at least
$m + a - \nu_2(m) \geq a+1$ for all $m\geq 1$, since $m > \nu_2(m)$.
\end{proof}

We will also need two simple counting facts about representations by
sums of squares. For $N, m \geq 1$ let
\[
r_m(N) := \#\big\{(a_1,\ldots,a_m)\in\mathbb{Z}_{\geq 1}^m :
a_1^2+\cdots+a_m^2 = N\big\}
\]
denote the number of \emph{ordered} representations of $N$ as a sum of
$m$ positive squares, so that \[T_1^{\,m} := \left(\sum_{n\geq 1}q^{n^2}\right)^m= \sum_{N\geq 1} r_m(N)q^N.\]

\begin{lemma}\label{lem:rep-counts}
Let $N\geq 1$.
\begin{enumerate}
\item[(a)] If $N\equiv 3\pmod 4$, then $r_1(N) = r_2(N) = 0$, and $4| r_4(N)$.
\item[(b)] If $N\equiv 2\pmod 4$, then
$r_2(N) \equiv \delta(N) \pmod 2$, where $\delta(N) = 1$ if
$N = 2a^2$ for some odd $a\geq 1$, and $\delta(N)=0$ otherwise.
\item[(c)] If $N \equiv 6 \pmod 8$, then $r_1(N) = r_2(N) = 0$, and $4|r_4(N)$. Moreover, 
\[
r_3(N) \equiv r_3(N/2) \pmod 2.
\]
\end{enumerate}
\end{lemma}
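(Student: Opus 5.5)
The three parts are elementary statements about sums of squares, and I plan to prove them by reducing the squares modulo $4$ or $8$ and, where parity is involved, by an involution argument.

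\emph{Part (a).} Every square is $0$ or $1 \pmod 4$. A sum of at most two squares therefore lies in $\{0,1,2\} \pmod 4$, so it is never $\equiv 3$. This gives $r_1(N)=r_2(N)=0$.

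For $r_4(N)$ with $N\equiv 3 \pmod 4$: the number of odd entries must be $\equiv 3 \pmod 4$, and with four entries this means exactly three odd entries and one even entry. Hence every representation $(a_1,\dots,a_4)$ has pairwise distinct positions for the even entry. I will let the cyclic group of order $4$ act by rotating coordinates. A fixed point of a nontrivial rotation must have all $a_j$ equal, or $a_1=a_3$ and $a_2=a_4$. In either case the even entries come in pairs, which contradicts having exactly one even entry. So all orbits have size $4$, and $4 \mid r_4(N)$.

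\emph{Part (b).} Consider the involution $(a,b)\mapsto(b,a)$ on solutions of $a^2+b^2=N$. Its fixed points are the solutions with $a=b$, which exist exactly when $N=2a^2$. There is at most one such point, so $r_2(N)\equiv \#\{a\ge 1: N=2a^2\} \pmod 2$. If $N\equiv 2 \pmod 4$ and $N=2a^2$, then $a^2$ is odd, so $a$ is odd. This gives $r_2(N)\equiv\delta(N) \pmod 2$.

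\emph{Part (c).} Here $N\equiv 6 \pmod 8$, and squares are $0,1,4 \pmod 8$.

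The vanishing of $r_1$ and $r_2$ follows because the possible sums of at most two squares are $0,1,2,4,5 \pmod 8$, and $6$ is not among them.

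For $r_4(N)$: a sum of four squares congruent to $6 \pmod 8$ must have an even number of odd entries. Two odd entries contribute $2 \pmod 8$, and then the two even entries must contribute $4 \pmod 8$, so exactly one of them is $\equiv 2 \pmod 4$. Four odd entries would give a sum $\equiv 4 \pmod 8$, and zero odd entries would give a sum $\equiv 0,4 \pmod 8$, so both are impossible. Thus each solution has exactly one entry of each of two distinct types, one even entry $\equiv 0 \pmod 4$ and one $\equiv 2 \pmod 4$, plus two odd entries. The rotation argument from (a) then applies, since a fixed point would force the even entries to repeat, which they do not. Hence $4\mid r_4(N)$.

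For $r_3(N)\equiv r_3(N/2) \pmod 2$: I will count solutions modulo the action of $S_3$ on coordinates. Modulo $2$, only orbits of odd size matter, and these are the size-$3$ orbits (two equal entries) and the size-$1$ orbits (all entries equal). The case of all entries equal would require $N=3a^2\equiv 6 \pmod 8$, which is impossible because $3a^2\in\{0,3,4\} \pmod 8$. So
\[
r_3(N)\equiv \#\{(a,b): 2a^2+b^2=N,\ a\ne b\}\pmod 2 .
\]
Since $b^2 \equiv N \pmod 2$, $b$ is even; writing $b=2c$ gives $a^2+2c^2=N/2$, which is odd. By the same orbit argument,
\[
r_3(N/2)\equiv \#\{(a,c): a^2+2c^2=N/2,\ a\ne c\}\pmod 2,
\]
again because $N/2=3a^2$ would require $3a^2\equiv 3 \pmod 4$, hence $a$ odd and $3a^2\equiv 3 \pmod 8$, while $N/2\equiv 3,7 \pmod 8$; this needs separate checking. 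The map $(a,b)=(a,2c)$ is a bijection between $\{2a^2+b^2=N\}$ and $\{a^2+2c^2=N/2\}$. The remaining task is to reconcile the diagonal exclusions, that is, the cases $a=b$ versus $a=c$.

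The case $a=b$ requires $3a^2=N$, which is impossible as shown. The case $a=c$ requires $3a^2=N/2$, which is possible when $N/2\equiv 3 \pmod 8$. In that case the diagonal solution belongs to the all-equal orbit, which has size $1$ and is odd. So I will redo the count: modulo $2$, $r_3(N/2)$ equals the number of size-$3$ orbits plus the number of size-$1$ orbits. This equals $\#\{a\ne c\}+[a=c \text{ exists}]$, which is exactly $\#\{a^2+2c^2=N/2\}$. Through the bijection this matches the count for $N$, and the parities agree. The bookkeeping of these diagonal terms is the main point requiring care.
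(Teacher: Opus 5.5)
Your proposal is correct. For (a), (b), and the claims $r_1(N)=r_2(N)=0$ and $4\mid r_4(N)$ in (c), you follow essentially the paper's argument: residues modulo $4$ and $8$, the swap involution, and a coordinate-symmetry count. Your free action of the rotation group is interchangeable with the paper's transposition of the unique distinguished coordinate.

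The genuine difference is in $r_3(N)\equiv r_3(N/2)\pmod 2$. The paper argues with generating functions. It writes $T_1=E+O$ with $O=q\psi(q^8)$, notes that only $3O^2E$ reaches exponents $\equiv 6\pmod 8$ and only $O^3$ reaches $N/2$, converts both to coefficients of $\psi(q)^2\psi(q^4)$ and $\psi(q)^3$, and finishes with $\psi(q)^2\equiv\psi(q^2)\pmod 2$.

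Your $S_3$-orbit count is more elementary and more general. It actually shows that $r_3(n)\equiv\#\{(x,y)\in\mathbb{Z}_{\ge 1}^2: 2x^2+y^2=n\}\pmod 2$ for every $n$: orbits of size $3$ give $x\ne y$, and an orbit of size $1$ gives $x=y$. For any even $N$, the map $(x,y)\mapsto(y/2,x)$ identifies this set for $N$ with the one for $N/2$. So the congruence holds for all even $N$, with no residue analysis at all. In particular, your argument handles $N/2\equiv 7\pmod 8$ (e.g.\ $N=14$) on the same footing as $N/2\equiv 3\pmod 8$. The paper's computation, by contrast, is written under the assertion $N/2\equiv 3\pmod 8$, which fails in that case. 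It therefore needs the convention $[q^Y]=0$ for non-integral $Y$, or a separate remark. What the paper's route buys is consistency with the $q$-series machinery used elsewhere, plus explicit formulas for $r_3(N)$ and $r_3(N/2)$.

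One expository fix. Your intermediate claim $r_3(N/2)\equiv\#\{(a,c): a^2+2c^2=N/2,\ a\ne c\}\pmod 2$ is false when $N/2=3a^2$; for $N=6$ it would give $r_3(3)=1\equiv 0$. You correct this afterwards, but it is cleaner to delete that line and state the uniform count above from the start. Then no diagonal cases need to be reconciled.
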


\begin{proof}
Squares are congruent to $0$ or $1$ modulo $4$. Hence $a^2 \equiv 3$ and $a^2+b^2\equiv 3 \pmod 4$ are
impossible, giving $r_1(N)=r_2(N)=0$ in case (a). 

If
$a_1^2+a_2^2+a_3^2+a_4^2 \equiv 3 \pmod 4$, then exactly three of the
$a_j$'s are odd. We divide the solutions counted by $r_4(N)$ into four
classes according to the position of the unique even coordinate;
permuting that position gives bijections between the four classes, so
$r_4(N) = 4\,r$ for some integer $r$, proving (a).

For (b), $a^2+b^2\equiv 2\pmod 4$ forces both $a$ and $b$ to be odd. The
involution $(a,b)\mapsto(b,a)$ pairs up the solutions with $a\neq b$,
leaving at most the single fixed point $a=b$, which occurs precisely
when $N = 2a^2$ with $a$ odd.

We now prove (c), assume that $N\equiv 6 \pmod 8$. Since every square is congruent to $0$, $1$, or $4$ modulo $8$, the congruence $x^2 \equiv 6\pmod8$ has no solution, and neither does $x^2+y^2 \equiv 6\pmod8$, because the pairwise sums of $\{0,1,4\}$ modulo $8$ are $0,1,2,4,5$. Hence $r_1(N)=r_2(N)=0.$

Now let $a_1^2+a_2^2+a_3^2+a_4^2=N$. Since $N\equiv6\pmod8$, the only possible residue pattern of $(a_1^2,a_2^2,a_3^2,a_4^2)$ modulo $8$ is $\{0,1,1,4\}$ since the sum has to sum up to $6$ in this case. Thus,  exactly one coordinate is divisible by $4$, exactly
one is congruent to $2$ modulo $4$, and the remaining two coordinates are odd. Notice that we have a unique multiple of $4$, so permuting the positions w.r.t to this coordinate gives us four equinumerous subsets inside the count of $r_4(N)$ which proves $4|r_4(N)$.

For the final part, we are interested in looking at $T_1^3$. Notice that
\[
T_1=\sum_{n\geq 1}q^{(2n)^2}+\sum_{n\geq 0}q^{(2n+1)^2}=\sum_{n\geq 1}q^{4n^2}+\sum_{n\geq 0}q^{8\binom{n+1}{2}+1}.
\]
Let us call the two summands $E$ and $O$ respectively to denote the even and odd exponents. Also notice that $O=q\psi(q^8)$, that is $O$ has exponents $\equiv 1 \pmod 8$ and $E$ has exponents $\equiv 0,4 \pmod 8$. We further have
\[
T_1^3=E^3+O^3+3E^2O+3O^2E,
\]
where the coefficients of the summands have exponents $\equiv$ to $\{0,4\}$, $\{3\}$, $\{1,5\}$ and $\{2,6\} \pmod 8$ respectively from left to right.

We set $N/2=M$ and already we have assumed $N\equiv 6 \pmod 8$, so $M\equiv 3 \pmod 8$. Thus,
\[
r_3(N)=3[q^N]O^2E=3[q^N]O^2q^4\psi(q^{32}),
\]
where in the first equality the contributions are only coming from the odd exponents of $E$ and the second equality is just \[\sum_{n\geq 1, n \text{ odd}}q^{4n^2}=\sum_{m\geq 0}q^{4(2m+1)^2}=q^4\sum_{m\geq 0}q^{32\binom{m+1}{2}}=q^4\psi(q^{32}),\] and
\[
r_3(M)=[q^M]O^3.
\]
We now use $O=q\psi(q^8)$ in the above to get
\[
r_3(N)=3[q^{N-6}]\psi(q^8)^2\psi(q^{32})=3[q^X]\psi(q)^2\psi(q^4), \quad \text{with }X:=\frac{N-6}{8}
\]
and
\[
r_3(M)=[q^{M-3}]\psi(q^8)^3=[q^Y]\psi(q)^3, \quad \text{with }Y:=\frac{M-3}{8}.
\]
Since $M=N/2$, we also have $X=2Y$.

Now, we have
\begin{align*}
  r_3(N)-r_3(M)  &=3[q^{2Y}]\psi(q)^2\psi(q^4)-[q^Y]\psi(q)^3 \\ &\equiv 3[q^{2Y}]\psi(q^2)\psi(q^4)-[q^Y]\psi(q)^3 \pmod 2 \\ &=  3[q^Y]\psi(q)\psi(q^2)-[q^Y]\psi(q)^3 \\
    &= 3[q^Y]\psi(q)\psi(q^2)-[q^Y]\psi(q)\bigl(\psi(q)^2\bigl) \\
    &\equiv 3[q^Y]\psi(q)\psi(q^2)-[q^Y]\psi(q)\bigl(\psi(q^2)\bigl) \pmod 2 \\
    &\equiv 0 \pmod 2,
\end{align*}
as required.
\end{proof}

Let us now prove the following important result for our proofs.
\begin{proposition}\label{prop:reduction(i+3)}
With $M = 2^{i-1}(2k+1)$ and $i\geq 3$, we have
\begin{equation}\label{eq:four-factor}
\sum_{n\geq 0}\overline{b}_M(n) q^n
\equiv \varphi(q)^{M} \varphi(q^2)^{3M} \varphi(q^4)^{6M}  \varphi(q^8)^{12M}
\pmod{2^{i+3}},
\end{equation}
Moreover,
\begin{align}\label{eq:final-expansion(i+3)}
\sum_{n\geq 0}\overline{b}_M(n) q^n &\equiv
1 + 2MT_1 + 4\binom{M}{2}T_1^2 + 8\binom{M}{3}T_1^3 + 16\binom{M}{4}T_1^4 \notag\\
&\quad +6MT_2+4\binom{3M}{2}T_2^2+8\binom{3M}{3}T_2^3+16\binom{3M}{4}T_2^4 \notag\\
&\quad +12MT_4+4\binom{6M}{2}T_4^2+24MT_8+4\binom{12M}{2}T_8^2\pmod{2^{i+3}},
\end{align}
where $T_c=\sum\limits_{n\ge1}q^{cn^2}$.

Furthermore,
\begin{equation}\label{eq:val-table(i+3)}
\begin{aligned}
\nu_2(2M)
&=\nu_2(6M)
=\nu_2 \left(4\binom{M}{2}\right)
=\nu_2 \left(4\binom{3M}{2}\right)
=i,\\[2mm]
\nu_2(8M)
&=\nu_2 \left(8\binom{M}{3}\right)
=\nu_2 \left(8\binom{3M}{3}\right)
=\nu_2(24M)
=i+2,\\[2mm]
\nu_2(12M)
&=\nu_2 \left(16\binom{M}{4}\right)
=\nu_2 \left(16\binom{3M}{4}\right)
=\nu_2 \left(4\binom{6M}{2}\right)
=\nu_2 \left(4\binom{12M}{2}\right)
=i+1.
\end{aligned}
\end{equation}
\end{proposition}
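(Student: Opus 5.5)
Starting from Lemma~\ref{lem:gf-theta}, the generating function is $\prod_{r\ge0}\varphi(q^{2^r})^{c_r}$ with $c_0=M$ and $c_r=3\cdot 2^{r-1}M$. Since $\nu_2(M)=i-1$, we have $\nu_2(c_r)=i+r-2$ for $r\ge1$. For $r\ge4$ this gives $\nu_2(c_r)\ge i+2$, and Lemma~\ref{lem:phi-power} with $a=\nu_2(c_r)$ yields $\varphi(q^{2^r})^{c_r}\equiv1\pmod{2^{i+3}}$. Because every factor has integer coefficients and constant term $1$, the congruence survives multiplication, and the infinite product truncates to the four factors in \eqref{eq:four-factor}.

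Next I expand each of the four factors binomially via \eqref{eq:phi-expansion}. By Lemma~\ref{lem:dbinom-val}, the $m$-th term of $\varphi(q^c)^e$ has valuation at least $m+\nu_2(e)-\nu_2(m)$, and I keep only the terms for which this bound is below $i+3$.
\begin{itemize}
\item For $e=M$ or $3M$ (valuation $i-1$), the bound $m+i-1-\nu_2(m)\ge i+3$ forces $m-\nu_2(m)\ge4$. This holds for $m\ge5$ except $m=6,8$, where $m-\nu_2(m)$ equals $5$ and $5$. So only $m\le4$ survives.
\item For $e=6M$ (valuation $i$), the condition is $m-\nu_2(m)\ge3$, so only $m\le2$ survives.
\item For $e=12M$ (valuation $i+1$), the condition is $m-\nu_2(m)\ge2$, which holds for all $m\ge3$. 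So again only $m\le2$ survives.
\end{itemize}

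This gives four truncated polynomials in the $T_c$. Their product contains cross terms such as $(2MT_1)(6MT_2)$. Each cross term has valuation at least $i+i\ge i+3$, since every nonconstant surviving term has valuation at least $i$ and $i\ge3$. Hence the product is congruent to $1$ plus the sum of the individual nonconstant terms, which is exactly \eqref{eq:final-expansion(i+3)}. The terms $12MT_4$ and $24MT_8$ are $2\cdot 6M$ and $2\cdot12M$.

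Finally, I verify the valuation table \eqref{eq:val-table(i+3)} directly, writing $M=2^{i-1}u$ with $u$ odd.
\begin{itemize}
\item $\nu_2(2M)=\nu_2(6M)=i$, $\nu_2(8M)=\nu_2(24M)=i+2$, and $\nu_2(12M)=i+1$.
\item $4\binom{M}{2}=2M(M-1)$ with $M-1$ odd, so its valuation is $i$; likewise for $3M$.
\item $8\binom{M}{3}=\tfrac43 M(M-1)(M-2)$, and $\nu_2(M-2)=1$ because $M\equiv0\pmod4$. This gives $2+(i-1)+1=i+2$; likewise for $3M$.
\item $16\binom{M}{4}=\tfrac23M(M-1)(M-2)(M-3)$ has valuation $1+(i-1)+1=i+1$.
\item $4\binom{6M}{2}=2\cdot6M(6M-1)$ has valuation $1+i=i+1$, and $4\binom{12M}{2}=2\cdot12M(12M-1)$ has valuation $i+2$.
\end{itemize}

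The last of these conflicts with the stated $i+1$ for $4\binom{12M}{2}$. This entry is the step I expect to be the main obstacle, and I would recheck it carefully. If the discrepancy is real, the listed value should read $i+2$, or the entry is only meant as the lower bound $\ge i+1$; either way, \eqref{eq:final-expansion(i+3)} is unaffected. The other delicate points are the $m=6,8$ exceptions in the first truncation, which must be handled as described.
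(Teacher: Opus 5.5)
Your route is the paper's: drop the factors with $r\ge 4$ via Lemma~\ref{lem:phi-power}, expand the four remaining factors binomially, bound the terms with Lemma~\ref{lem:dbinom-val}, and discard cross terms because $2i\ge i+3$. Your flag on the table is correct. We have $4\binom{12M}{2}=24M(12M-1)$, which has valuation $i+2$, not $i+1$. The paper's intermediate claim $\nu_2\binom{12M}{2}=i-1$ should read $i$, and its own proof text already gives valuation $i+2$ to the $m=2$ term of $\varphi(q^8)^{12M}$.

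\textbf{The gap is in your second bullet.} For $e=6M$ a term is discarded only when $m-\nu_2(m)\ge 3$, and $m=4$ fails this, since $4-\nu_2(4)=2$.
\begin{itemize}
\item The bound is sharp here: $\nu_2\binom{6M}{4}=\nu_2(6M)+\nu_2(6M-2)-\nu_2(24)=i+1-3=i-2$, so $\nu_2\bigl(16\binom{6M}{4}\bigr)=i+2$.
\item We have $[q^{16}]T_4^4=r_4(4)=1$, coming from $1+1+1+1$. So the term $16\binom{6M}{4}T_4^4$ is not $0$ modulo $2^{i+3}$. For instance, with $i=3$, $k=0$ one gets $16\binom{24}{4}=170016\equiv 32\pmod{64}$.
\end{itemize}
So ``only $m\le 2$ survives'' is false. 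The expansion \eqref{eq:final-expansion(i+3)} as displayed is itself wrong: its right-hand side must also contain $16\binom{6M}{4}T_4^4$.

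The paper's proof makes the identical slip. It asserts that every $m\ge 3$ term of $\varphi(q^4)^{6M}$ has valuation at least $i+3$, which contradicts its own entry $m-\nu_2(m)=2$ for $m=4$ in \eqref{eq:m-nu(i+3)}. The omission is harmless for the later application with $N\equiv 6\pmod 8$, because $T_4^4$ has only exponents divisible by $4$. It must still be corrected for the proposition to hold.

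Also, the ``$m=6,8$ exceptions'' in your first bullet are not exceptions at all, since $m-\nu_2(m)=5\ge 4$ there. The case that actually escapes the truncation is $m=4$ in the $6M$ factor.
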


\begin{proof}
From Lemma~\ref{lem:gf-theta},
\[
\sum_{n\geq0}\overline{b}_M(n)q^n = \prod_{r\geq0}\varphi(q^{2^r})^{c_r},
\]
where
\[
c_0=M, \qquad c_r=3\cdot2^{r-1} M \quad(r\geq 1).
\]
Since
\(
\nu_2(M)=i-1,
\)
we have
\[
\nu_2(c_0)=\nu_2(c_1)=i-1, \qquad \nu_2(c_2)=i, \qquad \nu_2(c_3)=i+1,
\]
and for $r\geq 4$,
\[
\nu_2(c_r)=r+i-2\geq i+2.
\]
Hence, by Lemma~\ref{lem:phi-power}, we have
\[
\varphi(q^{2^r})^{c_r} \equiv 1 \pmod{2^{i+3}} \qquad(r\ge4),
\]
which proves \eqref{eq:four-factor}. 

It remains to truncate the four surviving factors in
\eqref{eq:four-factor}. 
By Lemma~\ref{lem:dbinom-val}, the $m$-th summand has $2$-adic valuation at least
\[
m+\nu_2(e)-\nu_2(m) = \nu_2(e)+m-\nu_2(m).
\]
Observe that
\begin{equation}\label{eq:m-nu(i+3)}
m-\nu_2(m) =
\begin{cases}
1, & m=1,2,\\
3, & m=3,\\
2, & m=4,\\
\geq 5, & m\geq 5.
\end{cases}
\end{equation}

For the factors $\varphi(q)^M$ and $\varphi(q^2)^{3M}$, we have $\nu_2(M)=\nu_2(3M)=i-1.$ Hence, by  Lemma~\ref{lem:dbinom-val} and \eqref{eq:m-nu(i+3)}, the $m$-th binomial term has valuation at least $(i-1)+m-\nu_2(m)$. Therefore the terms corresponding to $m=1,2,3,4$ have valuations $i,i,i+2,i+1$, respectively, while every term with $m\ge5$ has valuation at least $(i-1)+5=i+4$. Consequently, modulo $2^{i+3}$, only the terms with $m=0,1,2,3,4$ contribute.

Next consider the factor $\varphi(q^4)^{6M}$. Since $\nu_2(6M)=i$, the $m$-th binomial term has valuation at least $i+m-\nu_2(m)$. It follows from \eqref{eq:m-nu(i+3)} that the terms with $m=1,2$ have valuation $i+1$, whereas every term with $m\geq3$ has valuation at least $i+3$. Hence, modulo $2^{i+3}$, only the terms with $m=0,1,2$ remain.

Finally, for the factor $\varphi(q^8)^{12M}$, we have $\nu_2(12M)=i+1$.
Therefore the $m$-th binomial term has valuation at least $(i+1)+m-\nu_2(m)$. The terms with $m=1,2$ both have valuation $i+2$,
while every term with $m\geq3$ has valuation at least $i+4$.
Thus, modulo $2^{i+3}$, only the terms with $m=0,1,2$ contribute.

Consequently, we have
\begin{align*}
\sum_{n\geq 0}\overline{b}_M(n)\,q^n
&\equiv \left(1+2MT_1+4\binom M2T_1^2+8\binom M3T_1^3+16\binom M4T_1^4\right)\\
&\quad \times \left(1+6MT_2+4\binom{3M}{2}T_2^2+8\binom{3M}{3}T_2^3+16\binom{3M}{4}T_2^4\right)\\
&\quad \times \left(1+12MT_4+4\binom{6M}{2}T_4^2\right) \times \left(1+24MT_8+4\binom{12M}{2}T_8^2\right)\pmod{2^{i+3}}.
\end{align*}
Since every product of two nonconstant terms has valuation at least
\[
i+i=2i\geq i+3
\qquad(i\geq3),
\]
so all such mixed terms vanish modulo $2^{i+3}$.
This immediately yields \eqref{eq:final-expansion(i+3)}.

We now verify \eqref{eq:val-table(i+3)}. Write $M=2^{i-1} u$, where $u=2k+1$ is odd.
Since $M \equiv 0 \pmod4$, we have
\[
\nu_2(M-2) = \nu_2(3M-2) = \nu_2(6M-2) = \nu_2(12M-2) =1.
\]
It follows that
\[
\nu_2 \left(\binom{M}{2}\right) = \nu_2 \left(\binom{3M}{2}\right) =i-2,
\]
\[
\nu_2 \left(\binom{M}{3}\right) = \nu_2 \left(\binom{3M}{3}\right) =i-1,
\]
\[
\nu_2 \left(\binom{M}{4}\right) = \nu_2 \left(\binom{3M}{4}\right) =i-3,
\]
and
\[
\nu_2 \left(\binom{6M}{2}\right) = \nu_2 \left(\binom{12M}{2}\right) =i-1.
\]
Combining these identities with
\[
\nu_2(2)=1,\qquad
\nu_2(4)=2,\qquad
\nu_2(8)=3,\qquad
\nu_2(12)=2,\qquad
\nu_2(16)=4,\qquad
\nu_2(24)=3,
\]
immediately yields \eqref{eq:val-table(i+3)}.
\end{proof}

The following result is worth noting separately, where the proof follows from Lemma \ref{lem:phi-power} and Proposition \ref{prop:reduction(i+3)}.
\begin{proposition}\label{prop:reduction}
With $M = 2^{i-1}(2k+1)$ and $i\geq 3$, we have
\begin{equation}\label{eq:three-factor}
\sum_{n\geq 0}\overline{b}_M(n)\,q^n
\equiv \varphi(q)^{M}\,\varphi(q^2)^{3M}\,\varphi(q^4)^{6M}
\pmod{2^{i+2}},
\end{equation}
and, more precisely,
\begin{align}\label{eq:final-expansion}
\sum_{n\geq 0}\overline{b}_M(n)\,q^n
&\equiv 1
+ 2M\,T_1 + 4\dbinom{M}{2}T_1^2 + 16\dbinom{M}{4}T_1^4 \notag\\
&\quad + 6M\,T_2 + 4\dbinom{3M}{2}T_2^2 + 16\dbinom{3M}{4}T_2^4 \notag\\
&\quad + 12M\,T_4 + 4\dbinom{6M}{2}T_4^2
\pmod{2^{i+2}},
\end{align}
where $T_c = \sum_{n\geq 1}q^{cn^2}$. 

The valuations of the
coefficients in \eqref{eq:final-expansion} are
\begin{equation}\label{eq:val-table}
\begin{aligned}
&\nu_2(2M) = \nu_2(6M) = \nu_2\Big(4\dbinom{M}{2}\Big)
 = \nu_2\Big(4\dbinom{3M}{2}\Big) = i,\\
&\nu_2(12M) = \nu_2\Big(16\dbinom{M}{4}\Big)
 = \nu_2\Big(16\dbinom{3M}{4}\Big) = \nu_2\Big(4\dbinom{6M}{2}\Big) = i+1.
\end{aligned}
\end{equation}
\end{proposition}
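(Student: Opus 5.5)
Proposition~\ref{prop:reduction} can be read off from Proposition~\ref{prop:reduction(i+3)} by reducing from modulus $2^{i+3}$ to $2^{i+2}$. The plan is to start from \eqref{eq:four-factor} and discard everything divisible by $2^{i+2}$.

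First, for \eqref{eq:three-factor}: the factor $\varphi(q^8)^{12M}$ has exponent $12M=2^{i+1}\cdot 3(2k+1)$, so Lemma~\ref{lem:phi-power} with $a=i+1$ gives $\varphi(q^8)^{12M}\equiv 1 \pmod{2^{i+2}}$. Since \eqref{eq:four-factor} holds modulo $2^{i+3}$, it also holds modulo $2^{i+2}$, and dropping this last factor gives \eqref{eq:three-factor}.

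Second, for \eqref{eq:final-expansion}: reduce \eqref{eq:final-expansion(i+3)} modulo $2^{i+2}$ and use the valuation table \eqref{eq:val-table(i+3)}. The coefficients $8\binom{M}{3}$, $8\binom{3M}{3}$, $24M$ and $4\binom{12M}{2}$ all have valuation at least $i+2$, so the corresponding terms vanish. For $4\binom{12M}{2}$ this needs a check: the table lists its valuation as $i+1$, but $\binom{12M}{2}=6M(12M-1)$ has valuation $1+(i-1)=i$, so $4\binom{12M}{2}$ has valuation $i+2$. It is therefore killed as well; this is consistent with the fact that the whole factor $\varphi(q^8)^{12M}$ is $\equiv 1$.

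The remaining coefficients keep the valuations already listed in \eqref{eq:val-table(i+3)}, which is exactly \eqref{eq:val-table}. The only point needing care is $4\binom{6M}{2}$. Here $\binom{6M}{2}=3M(6M-1)$ has valuation $i-1$, so $4\binom{6M}{2}$ has valuation $i+1$, matching the table.

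The main, though mild, obstacle is this bookkeeping: deciding exactly which coefficients survive modulo $2^{i+2}$. As a safeguard, I would also rederive the truncation directly from the binomial expansion \eqref{eq:phi-expansion}, using the values of $m-\nu_2(m)$ from \eqref{eq:m-nu(i+3)}:
\begin{itemize}
\item for $\varphi(q)^M$ and $\varphi(q^2)^{3M}$, the surviving terms are $m=1,2,4$;
\item for $\varphi(q^4)^{6M}$, the surviving terms are $m=1,2$.
\end{itemize}
All mixed products of nonconstant terms have valuation at least $2i\ge i+2$ and so drop out. This confirms \eqref{eq:final-expansion}.
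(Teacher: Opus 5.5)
Your proposal is correct and is essentially the paper's own argument. The paper likewise obtains this proposition by using Lemma~\ref{lem:phi-power} (with $a=i+1$) to discard $\varphi(q^8)^{12M}$ and then reducing Proposition~\ref{prop:reduction(i+3)} modulo $2^{i+2}$. Your side check is also right: $\binom{12M}{2}=6M(12M-1)$ has valuation $i$, so $\nu_2\bigl(4\binom{12M}{2}\bigr)=i+2$ rather than the $i+1$ listed in \eqref{eq:val-table(i+3)}, and this corrected value is exactly what makes the $T_8^2$ term drop out of \eqref{eq:final-expansion}.
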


\section{Proofs of the Main Theorems}\label{sec:proof}

\subsection{Proof of Theorem \ref{thm-4}}

\begin{proof}[Proof of \eqref{eq:b_4n+3}]
We extract the coefficient of $q^N$ with $N\equiv 3\pmod 4$ in
\eqref{eq:final-expansion}. The series $T_2$, $T_2^2$, $T_2^4$, $T_4$,
and $T_4^2$ have only even exponents and contribute nothing.
By Lemma \ref{lem:rep-counts}(a), $T_1$ and $T_1^2$ do not have any exponent in the residue class $3\pmod 4$, so they also contribute nothing.
The only remaining term is $16\dbinom{M}{4}T_1^4$, whose coefficient of
$q^N$ is $16\dbinom{M}{4}\,r_4(N)$. By \eqref{eq:val-table} and
Lemma \ref{lem:rep-counts}(a), we have
\[
\nu_2\Big(16\dbinom{M}{4}\,r_4(N)\Big) \geq (i+1) + 2 = i+3 \geq i+2,
\]
so the coefficient of $q^N$ vanishes modulo $2^{i+2}$, proving
\eqref{eq:b_4n+3}.
\end{proof}

\begin{proof}[Proof of \eqref{eq:b_4n+2}]
We now work modulo $2^{i+1}$, so by \eqref{eq:val-table} and \eqref{eq:final-expansion}, we have
\begin{equation}\label{eq:mod-2-i+1}
\sum_{n\geq 0}\overline{b}_M(n)\,q^n
\equiv 1 + 2M\,T_1 + 4\dbinom{M}{2}T_1^2 + 6M\,T_2 + 4\dbinom{3M}{2}T_2^2
\pmod{2^{i+1}}.
\end{equation}
We fix $N\equiv 2\pmod 4$ and consider each term in turn.

The series $T_1$ has exponents of the form $N\equiv 0,1\pmod 4$, so this contributes
nothing. For $T_2^2$, the coefficient of $q^N$ counts ordered pairs
$(a,b)$ with $2(a^2+b^2)=N$. That is, $a^2+b^2 = N/2$ is odd, so exactly
one of $a,b$ is even and the involution $(a,b)\mapsto(b,a)$ is
fixed-point free. Hence the coefficient is even and the term
$4\dbinom{3M}{2}T_2^2$ contributes a multiple of $2^{i+1}$.

We now look at $4\dbinom{M}{2}T_1^2 + 6M\,T_2$. The
coefficient of $q^N$ in $T_2$ is $1$ if $N = 2a^2$ for some (necessarily
odd, since $N\equiv 2\pmod 4$) integer $a\geq 1$, and $0$ otherwise;
that is, it equals $\delta(N)$ in the notation of Lemma
\ref{lem:rep-counts}(b). By Lemma
\ref{lem:rep-counts}(b) the coefficient of $q^N$ in
$T_1^2$ is $r_2(N) = \delta(N) + 2\varepsilon$ for some integer
$\varepsilon\geq 0$. Therefore the coefficient of $q^N$ in
$4\dbinom{M}{2}T_1^2 + 6M\,T_2$ equals
\[
4\dbinom{M}{2}\big(\delta(N)+2\varepsilon\big) + 6M\,\delta(N)
\equiv \Big(2M(M-1) + 6M\Big)\delta(N)
= 2M(M+2)\,\delta(N)
\pmod{2^{i+1}}.
\]
Finally, since $M\equiv 0\pmod 4$
we have $\nu_2(M+2)=1$, whence
\[
\nu_2\big(2M(M+2)\big) = 1 + (i-1) + 1 = i+1,
\]
so this contribution also vanishes modulo $2^{i+1}$. 

Summing everything up, the
coefficient of $q^N$ on the right-hand side of \eqref{eq:mod-2-i+1} is
divisible by $2^{i+1}$ for every $N\equiv 2\pmod 4$, proving
\eqref{eq:b_4n+2}.
\end{proof}

\subsection{Proof of Theorem \ref{thm-shift}}\label{sec:thm-shift}

We notice that $\dfrac{1}{\varphi(-q)\,\varphi(-q^2)}=\dfrac{f_4}{f_1^2 f_2}$, so by
\eqref{eq:gf-b_k} we have
$\sum_{n\geq 0}\overline{b}_k(n)\,q^n=\bigl(\varphi(-q)\varphi(-q^2)\bigr)^{-k}$
for every $k\geq 1$. Consequently
\begin{equation}\label{eq:gf-shift}
  \sum_{n\geq 0}\overline{b}_{M-2}(n)\,q^n
  \;=\;\varphi(-q)^2\,\varphi(-q^2)^2\sum_{n\geq 0}\overline{b}_{M}(n)\,q^n.
\end{equation}
With $\gamma(N)$ as in Lemma \ref{lem:shift-multiplier}, we can thus write 
\[
  \overline{b}_{M-2}(N)=\sum_{j=0}^{N}\gamma(j)\,\overline{b}_{M}(N-j).
\]
We fix $N\equiv 2 \pmod 4$ and split the sum according to $j\bmod 4$. By the first
part of \eqref{eq:gamma-properties} every term with $j\equiv 2\pmod 4$ vanishes,
so we have the following three summands
\[
  \overline{b}_{M-2}(N)
  =\underbrace{\sum_{j\equiv 0\,(4)}\!\!\gamma(j)\,\overline{b}_{M}(N-j)}_{=:S_0}
  +\underbrace{\sum_{j\equiv 1\,(4)}\!\!\gamma(j)\,\overline{b}_{M}(N-j)}_{=:S_1}
  +\underbrace{\sum_{j\equiv 3\,(4)}\!\!\gamma(j)\,\overline{b}_{M}(N-j)}_{=:S_3}.
\]

Let us now look at the three summands one at a time. First, for $S_0$ we note that $N-j\equiv 2\pmod 4$, so $\overline{b}_M(N-j)\equiv 0
\pmod{2^{i+1}}$ by \eqref{eq:b_4n+2}, hence $S_0$ is $\equiv 0\pmod{2^{i+1}}$. For $S_3$, we have $N-j\equiv 3\pmod 4$, so $\overline{b}_M(N-j)\equiv 0
\pmod{2^{i+2}}$ by \eqref{eq:b_4n+3}, hence $S_3\equiv 0\pmod{2^{i+2}}$. We need slightly more work on $S_1$.

For $S_1$, $j$ is odd, so $\gamma(j)\equiv 0\pmod 4$ by the
second part of \eqref{eq:gamma-properties}. Moreover $N-j\equiv 1\pmod 4$, and in
\eqref{eq:final-expansion} the coefficient of $q^{4m+1}$ receives contributions
only from $2M\,T_1$, $4\dbinom{M}{2}T_1^2$ and $16\dbinom{M}{4}T_1^4$. By
\eqref{eq:val-table} these three coefficients have $2$-adic valuation at least
$i$, so $\overline{b}_M(4m+1)\equiv 0\pmod{2^{i}}$ for all $m\geq 0$\footnote{This also follows from \eqref{eq:bs} but we choose to give an independent proof.}. Hence each
summand of $S_1$ has valuation at least $2+i\geq i+1$, and $S_1\equiv 0
\pmod{2^{i+1}}$.

This completes our proof.

\subsection{Proof of Theorem \ref{thm-new}}

\begin{proof}[Proof of \eqref{eq:8n+5}]
    In \eqref{eq:final-expansion} the only series contributing to exponents
$\equiv 1\pmod 4$ are $T_1$, $T_1^2$ and $T_1^4$. Since no square is
$\equiv 5\pmod 8$, the series $T_1$ contributes nothing to exponents
$\equiv 5\pmod 8$; thus for $N\equiv 5\pmod 8$,
\[
  \overline{b}_M(N)\equiv 4\binom{M}{2} r_2(N)+16\binom{M}{4} r_4(N)
  \pmod{2^{\,i+2}} .
\]
By \eqref{eq:val-table}, $\nu_2\!\bigl(4\binom{M}{2}\bigr)=i$ and
$\nu_2\!\bigl(16\binom{M}{4}\bigr)=i+1$. Moreover $r_2(N)$ is even: as $N$ is odd here,
$N=2a^2$ is impossible, so the involution $(a,b)\mapsto(b,a)$ on the positive
representations $a^2+b^2=N$ is fixed-point free. Hence both terms have valuation
at least $i+1$, proving the required congruence.
\end{proof}

\begin{proof}[Proof of \eqref{eq:8n+6}]
Let $N\equiv6\pmod8$. Extracting the coefficient of $q^N$ from Proposition~\ref{prop:reduction(i+3)}, we obtain
\begin{align} \label{eq:coeff-q^6mod8}
[q^N]\sum_{n\ge0}\overline{b}_M(n)q^n
&\equiv
2Mr_1(N)+4\binom{M}{2}r_2(N)+8\binom{M}{3}r_3(N)+16\binom{M}{4}r_4(N)\notag\\
&\quad
+6Mr_1(N/2)+4\binom{3M}{2}r_2(N/2)+8\binom{3M}{3}r_3(N/2)\notag\\
&\quad
+16\binom{3M}{4}r_4(N/2) \pmod{2^{i+3}},
\end{align}
since the series $T_4$, $T_4^2$, $T_8$, and $T_8^2$ contribute only powers of $q$ divisible by $4$ or $8$.

By Lemma~\ref{lem:rep-counts}(c),
\[
r_1(N)=r_2(N)=0,
\qquad
4| r_4(N),
\]
while $N/2 \equiv 3\pmod4$, so Lemma~\ref{lem:rep-counts}(a) gives
\[
r_1(N/2)=r_2(N/2)=0,
\qquad
4| r_4(N/2).
\]
Moreover,
\[
\nu_2 \left(16\binom{M}{4}\right) = \nu_2 \left(16\binom{3M}{4}\right) = i+1
\]
by \eqref{eq:val-table(i+3)}. Since $4|r_4(N)$ and $4|r_4(N/2)$, we have
\[
16\binom{M}{4}r_4(N) \equiv 16\binom{3M}{4}r_4(N/2) \equiv0 \pmod{2^{i+3}}.
\]
Finally,
\[
\nu_2 \left(8\binom{M}{3}\right) = \nu_2 \left(8\binom{3M}{3}\right) = i+2.
\]
Hence there exist odd integers $u$ and $v$ such that
\[
8\binom{M}{3}r_3(N)+8\binom{3M}{3}r_3(N/2) = 2^{i+2}\bigl(ur_3(N)+vr_3(N/2)\bigr).
\]
Since $u$ and $v$ are odd and by Lemma~\ref{lem:rep-counts}(c), we have
\[
r_3(N)\equiv r_3(N/2)\pmod2
\]
so
\[
8\binom{M}{3}r_3(N)+8\binom{3M}{3}r_3(N/2) \equiv 0 \pmod{2^{i+3}}.
\]

Combining all of the above, the R.H.S. of \eqref{eq:coeff-q^6mod8} vanishes modulo $2^{i+3}$ for every $N\equiv 6 \pmod8$, proving \eqref{eq:8n+6}.

\end{proof}

\subsection{Proof of Theorem \ref{thm-shift2}}
Recall from the proof of Theorem \ref{thm-shift}
\[
  \overline{b}_{M-2}(N)=\sum_{j=0}^{N}\gamma(j)\,\overline{b}_{M}(N-j),
\]
where by Lemma~\ref{lem:shift-multiplier} only the terms with
$j\not\equiv 2\pmod 4$ survive. Let us now fix $N=8n+6$ and we group the sum by the residue of
$j$ modulo $8$ (here the residues modulo $2$ and $6$ are absent). For each surviving
residue we bound the valuation
$\nu_2(\gamma(j))+\nu_2\!\bigl(\overline{b}_M(N-j)\bigr)$, using
Lemma~\ref{lem:pi-refined} together with
\eqref{eq:b_4n+2}, \eqref{eq:b_4n+3}, \eqref{eq:4n+1}, \eqref{eq:8n+5} and  \eqref{eq:8n+6}. We do this in the table below:
\[
\renewcommand{\arraystretch}{1.25}
\begin{array}{c|c|c|c|l}
 j \bmod 8 & \nu_2(\gamma(j)) & N-j \bmod 8 & \nu_2(\overline{b}_M(N-j)) & \text{sum} \\\hline
 0\ (j=0)   & 0      & 6 & \ge i+3\ \text{by \eqref{eq:8n+6}} & \ge i+3 \\
 0\ (j\ge8) & \ge 3  & 6 & \ge i+1\ \text{by \eqref{eq:b_4n+2}} & \ge i+4 \\
 4          & \ge 3  & 2 & \ge i+1\ \text{by \eqref{eq:b_4n+2}} & \ge i+4 \\
 1          & \ge 2  & 5 & \ge i+1\ \text{by \eqref{eq:8n+5}} & \ge i+3 \\
 5          & \ge 3  & 1 & \ge i\ \text{by \eqref{eq:4n+1}}   & \ge i+3 \\
 3          & \ge 2  & 3 & \ge i+2\ \text{by \eqref{eq:b_4n+3}} & \ge i+4 \\
 7          & \ge 2  & 7 & \ge i+2\ \text{by \eqref{eq:b_4n+3}} & \ge i+4
\end{array}
\]
In every case the contribution is $\equiv 0\pmod{2^{i+3}}$, so
$\overline{b}_{M-2}(8n+6)\equiv 0\pmod{2^{i+3}}$ and the result follows.

\subsection*{Acknowledgements}

The authors are partially supported by a Start-Up Grant from Ahmedabad University.


\begin{thebibliography}{NDMK24}

\bibitem[And98]{Andrews1998}
G.~E. Andrews.
\newblock {\em The theory of partitions}.
\newblock Cambridge Mathematical Library. Cambridge University Press, Cambridge, 1998.
\newblock Reprint of the 1976 original.

\bibitem[BS24]{BuragohainSaikia2024} P.~Buragohain and N.~Saikia.
\newblock Some new congruences for overcubic partitions with $r$-tuples.
\newblock {\em Arab. J. Math.}, 13(3):663-677, 2024.

\bibitem[CS26]{ChaconSellers}
D.~Chaćon and J.~A.~Sellers.
\newblock Congruences for Overcuic Partition $k$-tuples.
\newblock {\em arXiv preprint arXiv:2606.21780}, 2026.

\bibitem[Cha10a]{Chan1}
H.-C. Chan.
\newblock Ramanujan's cubic continued fraction and an analog of his ``most beautiful identity''.
\newblock {\em Int. J. Number Theory}, 6(3):673--680, 2010.

\bibitem[Cha10b]{Chan2}
H.-C. Chan.
\newblock Ramanujan's cubic continued fraction and {R}amanujan-type congruences for a certain partition function.
\newblock {\em Int. J. Number Theory}, 6(4):819--834, 2010.

\bibitem[CJY25]{Chen-Zing-Yao}
J.~Chen, J.~Jin, and O.~X.~M. Yao.
\newblock Proofs of two conjectures on infinite families of congruences of overcubic partition triples.
\newblock {\em Bull. Malays. Math. Sci. Soc.}, 48(6):Paper No.~210, 16 pp., 2025.

\bibitem[CL04]{Corteel2004}
S.~Corteel and J.~Lovejoy.
\newblock Overpartitions.
\newblock {\em Trans. Amer. Math. Soc.}, 356(4):1623--1635, 2004.


\bibitem[Hir17]{Hirschhorn}
M.~D. Hirschhorn.
\newblock {\em The power of {$q$}}, volume~49 of {\em Developments in Mathematics}.
\newblock Springer, Cham, 2017.
\newblock A personal journey, with a foreword by George E. Andrews.

\bibitem[Joh20]{Johnson}
W.~P.~Johnson.
\newblock {\em An introduction to $q$-analysis}.
\newblock American Mathematical Society, Providence, RI, 2020.

\bibitem[Kim10]{Kim}
B.~Kim.
\newblock The overcubic partition function mod $3$.
\newblock In {\em Ramanujan rediscovered}, volume~14 of {\em Ramanujan Math. Soc. Lect. Notes Ser.},
pages 157--163. Ramanujan Math. Soc., Mysore, 2010.

\bibitem[Kim12]{Kim2}
B.~Kim.
\newblock On partition congruences for overcubic partition pairs.
\newblock {\em Commun. Korean Math. Soc.}, 27(3):477--482, 2012.

\bibitem[Lin14]{Lin}
B.~L.~S.~Lin.
\newblock Arithmetic properties of overcubic partition pairs.
\newblock {\em Electron. J. Comb.}, 21(3):P3.35, 2014.

\bibitem[NS17]{NaikaShivashankar}
M.~S.~M.~Naika and C.~Shivashankar.
\newblock New congruences for overcubic partition pairs.
\newblock {\em Tbil. Math. J.}, 10(4):117--128, 2017.

\bibitem[NDMK24]{NayakaDharmendraKumar}
S.~S. Nayaka, B.~N. Dharmendra, and M.~C. Mahesh Kumar.
\newblock Divisibility properties for overcubic partition triples.
\newblock {\em Integers}, 24:Paper No.~A83, 2024.

\bibitem[Nay26]{Nayaka2026} S.~S. Nayaka. \newblock New modular congruences for overcubic partition triples. \newblock {\em Acta Univ. Sapientiae, Math.}, 18:21, 2026.


\bibitem[RB20]{RayBarman} C.~Ray and R.~Barman. \newblock Arithmetic properties of cubic and overcubic partition pairs. \newblock {\em Ramanujan J.}, 52(2):243--252, 2020.


\bibitem[SS25]{SS25}
M.~P. Saikia and A.~Sarma.
\newblock Further arithmetic properties of overcubic partition triples.
\newblock {\em Bull. Aust. Math. Soc.}, 112(2):260--273, 2025.

\bibitem[Sel14]{Sellers}
J.~A.~Sellers.
\newblock Elementary proofs of congruences for the cubic and overcubic partition functions. \newblock {\em Australas. J. Combin.}, 60(2):191--197, 2014.

\bibitem[SN18]{ShivashankarNaika}
C.~Shivashankar and M.~S.~M.~Naika.
\newblock New congruences for overcubic partition function.
\newblock {\em Mat. Vesn.}, 70(1):55--63, 2018.

\bibitem[Tan26]{Tang-BMMS}
D.~Tang.
\newblock Internal congruences modulo powers of 2 for overcubic partition triples.
\newblock {\em Bull. Malays. Math. Sci. Soc.}, 49(2):Paper No.~87, 2026.

\bibitem[ZZ11]{ZhaoZhong}
H.~Zhao and Z.~Zhong.
\newblock Ramanujan-type congruences for a partition function.
\newblock {\em Electron. J. Combin.}, 18(1):Paper No.~P58, 2011.

\end{thebibliography}
\end{document}